\DeclareMathAlphabet{\mathpzc}{OT1}{pzc}{m}{it}
\newtheoremstyle{lemma}{\topsep}{\topsep}
	{\itshape}
	{}
	{\bfseries}
	{.}
	{\newline}
	{\thmname{#1}\thmnumber{ #2}\thmnote{ (#3)}}	
\theoremstyle{lemma}
	\newtheorem{lemma}{Lemma}  
	\newtheorem{theorem}[lemma]{Theorem}
	\newtheorem{proposition}[lemma]{Proposition}
	\newtheorem{corollary}[lemma]{Corollary}  
	\newtheorem*{speciallemma*}{Lemma 5}
	\newtheorem*{specialproposition*}{Proposition 3}
	\newtheorem*{specialcorollary*}{Corollary 4}
\newtheoremstyle{definition}{\topsep}{\topsep}
	{}
	{}
	{\bfseries}
	{.}
	{\newline}
	{\thmname{#1}\thmnumber{ #2}\thmnote{ (#3)}}	
\theoremstyle{definition}
\newcommand{\N}{\ensuremath{\mathbb{N}}}
\newcommand{\R}{\ensuremath{\mathbb{R}}}
\newcommand{\sphere}{\ensuremath{\mathbb{S}}}
\newcommand{\HM}{\ensuremath{\mathcal{H}}}
\DeclarePairedDelimiter\abs{\lvert}{\rvert}
\DeclarePairedDelimiter\norm{\lVert}{\rVert}
\newcommand{\dd}{\ensuremath{\mathrm{d}}}
\DeclareMathOperator{\dist}{dist}
\newcommand{\gConst}{\ensuremath{C_{\mathrm{g}}}}
\newcommand{\bConst}{\ensuremath{C_{\mathrm{b}}}}
\renewcommand{\phi}{\varphi}
\renewcommand{\epsilon}{\varepsilon}
\begin{document}

\title{Discrete M\"obius Energy}
\author{\href{mailto:sebastian.scholtes@rwth-aachen.de}{Sebastian Scholtes}}
\date{\today}
\maketitle

\begin{abstract}
	We investigate a discrete version of the M\"obius energy, that is of geometric interest in its own right and is defined on equilateral polygons with $n$ segments. 
	We show that the $\Gamma$-limit regarding $L^{q}$ or $W^{1,q}$ convergence, $q\in [1,\infty]$ of these energies as $n\to\infty$ is the smooth M\"obius energy.  
	This result directly implies the convergence of almost minimizers of the discrete energies to minimizers of the smooth energy 
	if we can guarantee that the limit of the discrete curves belongs to the same knot class. Additionally, we show that the unique minimizer amongst all polygons is the regular $n$-gon. 
	Moreover, discrete overall minimizers converge to the round circle.
\end{abstract}
\centerline{\small Mathematics Subject Classification (2010): 49J45; 57M25, 49Q10, 53A04}

\section{Introduction}

The \emph{M\"obius energy} of a closed rectifiable curve $\gamma$ of length $L$ that is parametrised by arc length is given by 
\begin{align*}
	\mathcal{E}(\gamma)\vcentcolon=\int_{\sphere_{L}}\int_{\sphere_{L}}\Big(\frac{1}{\abs{\gamma(t)-\gamma(s)}^{2}}-\frac{1}{d(t,s)^{2}}\Big)\,\dd s\,\dd t.
\end{align*}
Here, $\sphere_{L}$ is the circle of length $L$ and $d$ the intrinsic metric on $\sphere_{L}$. This energy was introduced by O'Hara in \cite{OHara1991a} and has the interesting property
that it is invariant under M\"obius transformations, hence its name. O'Hara could show that finite energy prevents the curve from having selfintersections. Later on, the existence of energy minimizers in
prime knot classes was proven by Freedman, He and Wang in \cite{Freedman1994a}, while due to computer experiments done in \cite{Kusner1997a} there is a folklore conjecture, usually attributed to Kusner and Sullivan, 
that questions the existence in composite knot classes.
Additionally, it was shown in \cite{Freedman1994a} that the unique absolute minimizer is the round circle, see \cite{Abrams2003a} for this result for a broader class of energies.
In \cite{Freedman1994a,He2000a,Reiter2010a,Reiter2012a,Blatt2012b} the regularity of minimizers and, more generally, of critical points is investigated and it could be proved that they are smooth.
Furthermore, it was shown in \cite{Blatt2012a} that the M\"obius energy of a curve is finite if and only if the curve is embedded and the arc length parametrisation belongs to the fractional Sobolev space 
$W^{1+\frac{1}{2},2}(\sphere_{L},\R^{d})$.\\

When searching for discrete analogs to smooth problems it is not only of interest to approximate the smooth problem in a such a way that numerical computations may be done
more efficiently, but, more importantly, the discrete problem should reflect the structure of the smooth problem and be of interest in its own right.
A discrete version of the M\"obius energy, called \emph{minimum distance energy}, was introduced in \cite{Simon1994a}. If $p$ is a polygon with $n$ consecutive segments
$X_{i}$ this energy\footnote{Actually, this is a minor variant which is more commonly used than the energy originally considered in \cite{Simon1994a}.} is defined by
\begin{align}\label{MDenergy}
	\begin{split}
		\mathcal{E}_{\text{md},n}(p)\vcentcolon=\mathcal{U}_{\text{md},n}(p)-\mathcal{U}_{\text{md},n}(g_{n})\quad\text{with}\quad
		\mathcal{U}_{\text{md},n}(p)\vcentcolon=\sum_{i=1}^{n}\sum_{\substack{j=1\\X_{i},X_{j}\text{ not adjacent}}}^{n}\frac{\abs{X_{i}}\abs{X_{j}}}{\dist(X_{i},X_{j})^{2}},
	\end{split}
\end{align}
where $g_{n}$ is the regular $n$-gon. Note, that this energy is scale invariant. In \cite{Rawdon2006a,Rawdon2010a} questions regarding the relation between the minimum distance energy
and the M\"obius energy were considered. It could be shown that the minimum distance energy $\mathcal{E}_{\text{md},n}$ of polygons that are suitably inscribed in a $C^{2}$ knot $\gamma$ converges 
to the M\"obius energy $\mathcal{E}(\gamma)$ as $n\to\infty$.
Furthermore, an explicit error bound on the difference between the minimum distance energy of an equilateral polygonal knot
and the M\"obius energy of a smooth knot, appropriately inscribed in the polygonal knot, could be established in terms of thickness and the number of segments. 
However, it is not possible to infer from these results that the minimal minimum distance
energy converges to the minimal M\"obius energy in a fixed knot class. 
In \cite{Speller2007a} it was shown that the overall minimizers of the minimum distance energy must be convex and from \cite{Tam2006a,Speller2008a} we know that the overall minimizers in the class of
$4$- and $5$-gons are the regular $4$- and $5$-gon, respectively. This evidence supports the conjecture that the regular $n$-gon minimizes the minimum distance energy in the class of $n$-gons.
Numerical experiments regarding the minimum distance energy under the elastic flow were realized in \cite{Hermes2008a}.\\

In the present note we use another, more obvious, discrete version of the M\"obius energy, that was also used for numerical experiments in \cite{Kim1993a}.
This energy, defined on the class of arc length parametrisations of polygons of length $L$ with $n$ segments, is given by
\begin{align}\label{discreteMoebiusenergy}
	\mathcal{E}_{n}(p)\vcentcolon=\sum_{\substack{i,j=1\\i\not=j}}^{n}\Big(\frac{1}{\abs{p(a_{j})-p(a_{i})}^{2}}-\frac{1}{d(a_{j},a_{i})^{2}}\Big)d(a_{i+1},a_{i})d(a_{j+1},a_{j}),
\end{align}
where the $a_{i}$ are consecutive points on $\sphere_{L}$, or the interval $[0,L]$ if we think of the polygon as being parametrised over an interval.
This energy is scale invariant. A slight variant would be to take $2^{-1}(d(a_{k-1},a_{k})+d(a_{k},a_{k+1}))$ instead of $d(a_{k+1},a_{k})$.
As for the minimum distance energy we are interested
whether polygonal minimizers of \eqref{discreteMoebiusenergy} in a fixed tame knot class $\mathcal{K}$ converge to a minimizer of the M\"obius energy in a suitable topology. The following theorem
reveals the relationship of discrete and smooth M\"obius energy in terms of the so-called $\Gamma$-convergence invented by DeGiorgi. In order to establish \emph{$\Gamma$-convergence}, we have to verify 
two inequalities called $\liminf$ inequality, see Proposition \ref{liminfmoebius}, and $\limsup$ inequality, see Proposition \ref{limsupmoebius}.

\begin{theorem}[M\"obius energy $\mathcal{E}$ is $\Gamma$-limit of discrete M\"obius energies $\mathcal{E}_{n}$]\label{maintheoremgammamoebius}
	For $q\in [1,\infty]$, $\norm{\cdot}\in\{\norm{\cdot}_{L^{q}(\sphere_{1},\R^{d})},\norm{\cdot}_{W^{1,q}(\sphere_{1},\R^{d})}\}$ and every tame knot class $\mathcal{K}$ holds 
	\begin{align*}
		\mathcal{E}_{n}\xrightarrow{\Gamma} \mathcal{E}\quad\text{on }(\mathcal{C}_{1,\text{p}}(\mathcal{K}),\norm{\cdot}).
	\end{align*}
\end{theorem}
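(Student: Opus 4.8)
The plan is to obtain the theorem as the combination of the two one-sided estimates that characterise $\Gamma$-convergence, namely the $\liminf$ inequality $\mathcal{E}(p)\le\liminf_{n}\mathcal{E}_{n}(p_{n})$ for every convergent sequence $p_{n}\to p$, and the $\limsup$ inequality asserting that every $p$ admits a recovery sequence $p_{n}\to p$ with $\limsup_{n}\mathcal{E}_{n}(p_{n})\le\mathcal{E}(p)$; here each $\mathcal{E}_{n}$ is tacitly extended by $+\infty$ to curves that fail to be equilateral $n$-gons. These are precisely Proposition \ref{liminfmoebius} and Proposition \ref{limsupmoebius}, so the theorem reduces to assembling them, the only additional work being the uniform treatment of all exponents $q$ and of both families of norms.

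To cover the whole range at once I would exploit the monotonicity of the two inequalities in the strength of the topology. Refining the topology can only increase the $\Gamma$-$\liminf$ functional, so the $\liminf$ inequality proved for the weakest topology persists for every stronger one; dually, a recovery sequence for a stronger topology is still a recovery sequence for every weaker one, so the $\limsup$ inequality proved for the strongest topology descends to all weaker ones. On $\sphere_{1}$ with curves of length $1$ the norms in question form a single chain bounded below and above: since $W^{1,1}\hookrightarrow C^{0}$ in one dimension, the $L^{1}$-topology is the weakest and the $W^{1,\infty}$-topology the strongest member of $\{\norm{\cdot}_{L^{q}(\sphere_{1},\R^{d})},\norm{\cdot}_{W^{1,q}(\sphere_{1},\R^{d})}:q\in[1,\infty]\}$. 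It therefore suffices to establish the $\liminf$ inequality for $L^{1}$-convergent sequences and the $\limsup$ inequality with a $W^{1,\infty}$-convergent recovery sequence, and I would accordingly aim the two propositions at exactly these endpoints.

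For the $\liminf$ inequality I would pass to a subsequence realising the $\liminf$ and having uniformly bounded discrete energy; finiteness of $\mathcal{E}_{n}$ ought to furnish a uniform lower bound on pairwise distances measured against arc length, which survives the limit and keeps $p$ embedded in $\mathcal{K}$. Reading the double sum in \eqref{discreteMoebiusenergy} as the integral of a piecewise-constant approximation of the smooth integrand and applying Fatou's lemma would then yield $\mathcal{E}(p)\le\liminf_{n}\mathcal{E}_{n}(p_{n})$, the delicate point being the near-diagonal block in which both kernels are singular.

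For the $\limsup$ inequality I would take $p_{n}$ to be the equilateral polygon inscribed in $p$ at $n$ equidistant arc-length points and pass to the limit by dominated convergence, with a majorant built from the bi-Lipschitz and finite-energy bounds of $p$; since embeddedness is an open condition, $p_{n}\in\mathcal{K}$ for large $n$. I expect the main obstacle to be precisely this strongest endpoint: one must dominate, uniformly in $n$, the near-diagonal terms where $\abs{p(a_{j})-p(a_{i})}^{-2}$ and $d(a_{j},a_{i})^{-2}$ each diverge while their difference stays summable, and simultaneously secure $W^{1,\infty}$-convergence $p_{n}\to p$, which forces a confrontation with the regularity of arc-length parametrisations of finite-energy curves. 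This near-diagonal cancellation, common to both propositions, is the technical core underpinning the entire result.
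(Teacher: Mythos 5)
Your reduction of the theorem to two endpoint inequalities is exactly the paper's strategy: the $\liminf$ inequality is proved for $L^{1}$-convergent sequences (Proposition \ref{liminfmoebius}, via Fatou's lemma applied to the piecewise-constant approximants, just as you sketch -- though note that no embeddedness or uniform distance bound for the limit is needed there, since every summand is nonnegative for arc-length parametrised curves, so Fatou applies unconditionally), the recovery sequence is produced converging in $W^{1,\infty}$ (Proposition \ref{limsupmoebius}), and the intermediate norms are handled by precisely the monotonicity argument you describe.

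The genuine gap is in your recovery sequence. A polygon inscribed in $p$ at $n$ \emph{equidistant arc-length points} is not equilateral -- equal arcs do not give equal chords -- so it does not lie in $\mathcal{P}_{n}$, indeed not even in the space $\mathcal{C}_{1,\text{p}}(\mathcal{K})$ (its length is also strictly less than $1$), and by your own extension convention $\mathcal{E}_{n}(p_{n})=+\infty$ for every $n$: the $\limsup$ inequality fails outright for this choice. The existence of a \emph{closed equilateral} $n$-gon inscribed in a closed curve is a nontrivial fact, for which the paper invokes \cite{Wu2004a}; the inscribed polygon is then rescaled to length $1$, which is harmless only because $\mathcal{E}_{n}$ is scale invariant. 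Two further steps you treat as routine also need real arguments. First, membership in $\mathcal{K}$ does not follow from ``embeddedness is an open condition'' (openness of embeddedness does not fix the knot class, and your polygons are only Lipschitz, not $C^{1}$); the paper verifies the diamond property of $\gamma$ and applies \cite[Theorem 4.10]{Strzelecki2013b}. Second, the convergence $\mathcal{E}_{n}(p_{n})\to\mathcal{E}(p)$ is not a dominated-convergence argument with a fixed majorant: the discrete energy samples the polygon at its \emph{own} arc-length parameters $a_{i}$, which differ from the curve's parameters $b_{i}$, and controlling this discrepancy is what the quantitative apparatus of Proposition \ref{orderofconvergence} and Corollary \ref{convergencemoebiusinscribedpolygons} is for -- splitting the sum at a scale $N_{n}$ tuned to the local fractional Sobolev energy $\epsilon_{n}$, with separate mechanisms for the near-diagonal and far blocks. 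Your proposal correctly identifies this as the technical core but does not supply it.
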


Here, $\mathcal{C}_{1,\text{p}}\vcentcolon=\mathcal{C}\cap (C^{1}\cup \bigcup_{n\in\N}\mathcal{P}_{n})$, where $\mathcal{C}$ is the space of arc length curves of length $1$ and $\mathcal{P}_{n}$ the subspace
of equilateral polygons with $n$ segments. Adding a knot class $\mathcal{K}$ in brackets to a set of curves restricts this set to the subset of curves that belong to the knot class $\mathcal{K}$.
By $W^{k,q}(\sphere_{1},\R^{d})$ we denote the standard Sobolev spaces of $k$-times weakly differentiable closed curves with $q$-integrable weak derivative.
The functionals are extended by infinity outside their natural domain.
The notion of $\Gamma$-convergence is devised in such a way, as to allow the convergence of minimizers and even almost minimizers,
see \cite[Corollary 7.17, p.78]{Dal-Maso1993a}. Considering the fact that the $\liminf$ inequality holds on $\mathcal{C}(\mathcal{K})$ and that we already know that minimizers of $\mathcal{E}$ in prime knot classes
are smooth Lemma \ref{variationalconvergencegeneral} yields:

\begin{corollary}[Convergence of discrete almost minimizers]\label{corollaryconvergenceminmizers}
	Let $\mathcal{K}$ be a tame prime knot class, $p_{n}\in\mathcal{P}_{n}(\mathcal{K})$ with
	\begin{align*}
		\abs{\inf_{\mathcal{P}_{n}(\mathcal{K})}\mathcal{E}_{n}-\mathcal{E}_{n}(p_{n})}\to 0\qquad\text{and}\qquad
		p_{n}\to\gamma\in\mathcal{C}(\mathcal{K})\text{ in }L^{1}(\sphere_{1},\R^{d}).
	\end{align*}
	Then $\gamma$ is a minimizer of $\mathcal{E}$ in $\mathcal{C}(\mathcal{K})$ and $\lim_{k\to\infty}\mathcal{E}_{n}(p_{n})=\mathcal{E}(\gamma)$.
\end{corollary}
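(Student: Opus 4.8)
The plan is to run the fundamental theorem of $\Gamma$-convergence, taking care of the fact that the functionals $\Gamma$-converge only on the restricted space $\mathcal{C}_{1,\text{p}}(\mathcal{K})$, whereas we want minimality of $\gamma$ over the larger space $\mathcal{C}(\mathcal{K})$. First I would invoke the $\liminf$ inequality: since $p_n\to\gamma$ in $L^1$ and that inequality (Proposition \ref{liminfmoebius}) is valid on all of $\mathcal{C}(\mathcal{K})$, we obtain $\mathcal{E}(\gamma)\le\liminf_n\mathcal{E}_n(p_n)$. This is the one step that deals with the a priori merely rectifiable limit curve $\gamma$, and it is essential here that the $\liminf$ inequality holds on the full space and not just on $C^1\cup\bigcup_{n\in\N}\mathcal{P}_n$.

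Next I would supply a comparison curve. Because $\mathcal{K}$ is tame and prime, the existence result of Freedman, He and Wang yields a minimizer $\eta$ of $\mathcal{E}$ in $\mathcal{C}(\mathcal{K})$, and by the regularity theory cited in the introduction $\eta$ is smooth, hence $\eta\in C^1\subset\mathcal{C}_{1,\text{p}}(\mathcal{K})$. This is the decisive structural input: it places the minimizer inside the space on which $\Gamma$-convergence is available, so that $\inf_{\mathcal{C}(\mathcal{K})}\mathcal{E}=\mathcal{E}(\eta)$ and the $\limsup$ inequality (Proposition \ref{limsupmoebius}) supplies a recovery sequence $q_n\to\eta$ with $q_n\in\mathcal{P}_n(\mathcal{K})$ and $\limsup_n\mathcal{E}_n(q_n)\le\mathcal{E}(\eta)$.

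Then I would chain the two halves together. Writing $\epsilon_n\vcentcolon=\abs{\inf_{\mathcal{P}_n(\mathcal{K})}\mathcal{E}_n-\mathcal{E}_n(p_n)}\to 0$ and using that $q_n$ is an admissible competitor for that infimum, almost-minimality of $p_n$ gives $\mathcal{E}_n(p_n)\le\inf_{\mathcal{P}_n(\mathcal{K})}\mathcal{E}_n+\epsilon_n\le\mathcal{E}_n(q_n)+\epsilon_n$, whence
\begin{align*}
	\mathcal{E}(\gamma)\le\liminf_n\mathcal{E}_n(p_n)&\le\limsup_n\mathcal{E}_n(p_n)\le\limsup_n\mathcal{E}_n(q_n)\\
	&\le\mathcal{E}(\eta)=\inf_{\mathcal{C}(\mathcal{K})}\mathcal{E}\le\mathcal{E}(\gamma),
\end{align*}
the final inequality holding because $\gamma\in\mathcal{C}(\mathcal{K})$. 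Thus every inequality is an equality, so $\gamma$ minimizes $\mathcal{E}$ on $\mathcal{C}(\mathcal{K})$ and $\mathcal{E}_n(p_n)\to\mathcal{E}(\gamma)$. I expect this chaining to be precisely the content of the abstract Lemma \ref{variationalconvergencegeneral}.

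The main obstacle, and the reason the hypotheses ``tame'' and ``prime'' cannot be dropped, is exactly the mismatch between the two spaces: $\Gamma$-convergence on $\mathcal{C}_{1,\text{p}}(\mathcal{K})$ by itself would only show that $\gamma$ beats smooth and polygonal competitors. Promoting this to genuine minimality over every arc length curve in $\mathcal{C}(\mathcal{K})$ requires both the existence of a minimizer (where primeness enters, via Freedman, He and Wang) and its smoothness (the regularity theory), so that the competitor $\eta$ really lies in the smaller space on which a polygonal recovery sequence can be built. The single point I would check with care is that the recovery sequence of Proposition \ref{limsupmoebius} consists of equilateral $n$-gons in the knot class $\mathcal{K}$, since this is what makes each $q_n$ admissible for $\inf_{\mathcal{P}_n(\mathcal{K})}\mathcal{E}_n$.
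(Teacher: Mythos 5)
Your proposal is correct and takes essentially the same route as the paper: the paper's (very compressed) proof is exactly the application of Lemma \ref{variationalconvergencegeneral} with the $\liminf$ inequality of Proposition \ref{liminfmoebius} valid on all of $\mathcal{C}(\mathcal{K})$, with $Y=\mathcal{C}(\mathcal{K})\cap C^{1}$ as the set admitting recovery sequences via Proposition \ref{limsupmoebius}, and with the existence (Freedman--He--Wang, for prime classes) and smoothness of minimizers used to identify $\inf_{Y}\mathcal{E}=\inf_{\mathcal{C}(\mathcal{K})}\mathcal{E}$. Your explicit chain of inequalities is precisely the content of that abstract lemma, as you yourself anticipated.
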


This result remains true for subsequences, where the number of edges is allowed to increase by more than $1$ for two consecutive polygons.
Since all curves are parametrised by arc length it is not hard to find a
subsequence of the almost minimizers that converges in $C^{0}$, but generally this does not guarantee that the limit curve belongs to the same knot class or is parametrised by arc length.

\begin{proposition}[Order of convergence for M\"obius energy of inscribed polygon]\label{orderofconvergence}
	Let $\gamma\in C^{1,1}(\sphere_{L},\R^{d})$ be parametrised by arc length, $c,\overline c>0$. Then for every $\epsilon\in (0,1)$ there is a $C_{\epsilon}>0$ such that 
	\begin{align*}
		\abs{\mathcal{E}(\gamma)-\mathcal{E}_{n}(p_{n})}\leq \frac{C_{\epsilon}}{n^{1-\epsilon}}
	\end{align*}
	for every inscribed polygon $p_{n}$ given by a subdivision $b_{k}$, $k=1,\ldots,n$ of $\sphere_{L}$ such that 
	\begin{align*}
		\frac{c}{n}\leq \min_{k=1,\ldots,n}\abs{\gamma(b_{k+1})-\gamma(b_{k})} \leq \max_{k=1,\ldots,n}\abs{\gamma(b_{k+1})-\gamma(b_{k})}\leq \frac{\overline c}{n}.
	\end{align*}
\end{proposition}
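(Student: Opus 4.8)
The plan is to read the sum $\mathcal{E}_{n}(p_{n})$ as a quadrature rule for the integral $\mathcal{E}(\gamma)$ and to control the error through the regularity of the kernel
\[
	K(s,t)\vcentcolon=\frac{1}{\abs{\gamma(t)-\gamma(s)}^{2}}-\frac{1}{d(s,t)^{2}}.
\]
I write $I_{k}\vcentcolon=[b_{k},b_{k+1}]$, $h_{k}\vcentcolon=b_{k+1}-b_{k}$ for the arc--length spacing, $\ell_{k}\vcentcolon=\abs{\gamma(b_{k+1})-\gamma(b_{k})}$ for the chord lengths (these are exactly the segment lengths entering $\mathcal{E}_{n}$), $D_{n}(i,j)$ for the intrinsic distance between the $i$-th and $j$-th vertex of the inscribed polygon, and $K_{n}(i,j)\vcentcolon=\abs{\gamma(b_{j})-\gamma(b_{i})}^{-2}-D_{n}(i,j)^{-2}$ for the discrete kernel, so that $\mathcal{E}_{n}(p_{n})=\sum_{i\neq j}K_{n}(i,j)\,\ell_{i}\ell_{j}$. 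First I would record the elementary geometric facts valid for $\gamma$: being an embedded curve of class $C^{1,1}$ it has positive reach, whence a bi-Lipschitz bound $\abs{\gamma(t)-\gamma(s)}\ge\lambda\,d(s,t)$; the hypothesis makes $h_{k}$ and $\ell_{k}$ comparable, both of order $1/n$, with $0\le h_{k}-\ell_{k}\le C h_{k}^{3}$; and one has the Taylor-type expansions $\abs{\gamma(t)-\gamma(s)}^{2}=(t-s)^{2}+O((t-s)^{4})$ and $\langle\gamma(t)-\gamma(s),\gamma'(t)\rangle=(t-s)+O((t-s)^{3})$ for small $\abs{t-s}$, all constants depending only on $\norm{\gamma''}_{\infty}$ and the reach.

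The step I regard as the heart of the matter is the kernel estimate
\[
	\abs{K(s,t)}\le M\qquad\text{and}\qquad \abs{\nabla K(s,t)}\le\frac{C}{d(s,t)}\quad\text{for a.e. }s\neq t,
\]
with $M,C$ depending only on $\norm{\gamma''}_{\infty}$ and the reach. Its content is that, although each of the two summands of $K$ has gradient of size $d(s,t)^{-3}$ near the diagonal, the expansions above make their singular parts cancel, leaving only a $d(s,t)^{-1}$ remainder; away from the diagonal both summands are smooth with derivatives bounded in terms of the reach. The delicate point, and the main obstacle of the proof, is that this cancellation has to be extracted with merely $C^{1,1}$ regularity, where no third derivative of $\gamma$ is available, so the gain of two powers of $d(s,t)$ must be read off directly from the two displayed expansions.

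Granting this, I would use that $\{I_{i}\times I_{j}\}$ tiles $\sphere_{L}\times\sphere_{L}$, that $K(b_{i},b_{j})h_{i}h_{j}=\int_{I_{i}\times I_{j}}K(b_{i},b_{j})$, and that the adjacent terms $\abs{i-j}=1$ of $\mathcal{E}_{n}$ vanish identically (for consecutive vertices the chord equals the polygon arc), to decompose
\begin{align*}
	\mathcal{E}(\gamma)-\mathcal{E}_{n}(p_{n})
	={}&\sum_{i}\int_{I_{i}\times I_{i}}K
	+\sum_{i\neq j}\Big(\frac{1}{D_{n}(i,j)^{2}}-\frac{1}{d(b_{i},b_{j})^{2}}\Big)h_{i}h_{j}\\
	&+\sum_{i\neq j}K_{n}(i,j)\,(h_{i}h_{j}-\ell_{i}\ell_{j})
	+\sum_{i\neq j}\int_{I_{i}\times I_{j}}\big(K(s,t)-K(b_{i},b_{j})\big)\,\dd s\,\dd t.
\end{align*}
The diagonal term is $O(n^{-1})$, since $\abs{K}\le M$ and $\sum_{i}h_{i}^{2}=O(n^{-1})$. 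In the third sum $h_{i}h_{j}-\ell_{i}\ell_{j}=O(n^{-4})$ and $\abs{K_{n}}\le M'$ (a consequence of $\abs{K}\le M$ and the next estimate), so it is $O(n^{-2})$. For the second sum one checks $D_{n}(i,j)^{-2}-d(b_{i},b_{j})^{-2}=O(\abs{i-j}^{-2})$ from $d-D_{n}=O(\abs{i-j}\,n^{-3})$, whence, with $2n$ pairs at each index distance $m$, a total of order $n^{-2}\sum_{m}n\,m^{-2}=O(n^{-1})$.

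The decisive contribution is the quadrature sum. On a cell $I_{i}\times I_{j}$ with $m\vcentcolon=\abs{i-j}\ge2$ one has $d(s,t)\gtrsim m/n$, so the kernel estimate gives $\sup_{I_{i}\times I_{j}}\abs{\nabla K}\le Cn/m$; bounding $\abs{K(s,t)-K(b_{i},b_{j})}$ by $\diam(I_{i}\times I_{j})\sup\abs{\nabla K}\lesssim (1/n)(n/m)$ and integrating over the cell of area $O(n^{-2})$ yields a per-cell bound $O(n^{-2}/m)$. Since there are $O(n)$ pairs at index distance $m$ and $2\le m\le n/2$, the quadrature sum is $O\big(n^{-1}\sum_{m}m^{-1}\big)=O(n^{-1}\log n)$. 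Collecting the four terms gives $\abs{\mathcal{E}(\gamma)-\mathcal{E}_{n}(p_{n})}\le Cn^{-1}\log n$, and since $n^{-\epsilon}\log n$ is bounded for each $\epsilon\in(0,1)$ this is at most $C_{\epsilon}n^{-(1-\epsilon)}$, as claimed. I expect the only real difficulty to lie in the kernel estimate; the four error sums are then routine bookkeeping.
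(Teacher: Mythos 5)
Your proposal is correct and arrives at the same bound $O(n^{-1}\log n)\leq C_{\epsilon}n^{\epsilon-1}$ as the paper, but it is organized around a different key device, so the two proofs are worth comparing. The paper never differentiates the kernel: it removes a near-diagonal band of \emph{fixed} index width $N=4\bConst\,\overline c/c$, shows that both energies contribute only $O(1/n)$ there (Steps 1--2, via \eqref{differencesquareddistances}), and in the far field compares $\int_{I_{i}\times I_{j}}K$ with the discrete summand through explicit finite-difference quantities $A_{i,j}$, $B_{i,j}$, $C_{i,j}$, $D_{i,j}$, each of order $\gConst/(\abs{j-i}\,n^{2})$; your second, third and fourth sums play precisely the roles of $D_{i,j}$, $C_{i,j}$ and $A_{i,j}+B_{i,j}$. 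What your organization buys is modularity: the cancellation is packaged once into $\abs{K}\leq M$ and $\abs{\nabla K}\leq C/d(s,t)$, the excluded band shrinks to $i=j$, and the rest is standard quadrature bookkeeping. Your kernel estimate is indeed provable from exactly the two expansions you list, with only $C^{1,1}$ regularity: for $0<t-s=d(s,t)$ small one has $\partial_{t}K=2(t-s)^{-3}-2\langle\gamma(t)-\gamma(s),\gamma'(t)\rangle\abs{\gamma(t)-\gamma(s)}^{-4}$, and inserting $\langle\gamma(t)-\gamma(s),\gamma'(t)\rangle=(t-s)+O((t-s)^{3})$ together with $\abs{\gamma(t)-\gamma(s)}^{-4}=(t-s)^{-4}\bigl(1+O((t-s)^{2})\bigr)$ makes the $(t-s)^{-3}$ parts cancel, leaving $O((t-s)^{-1})$; away from the diagonal the bi-Lipschitz bound alone suffices. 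This is the same cancellation the paper performs in Step 3, there in finite-difference form ($\abs{\int_{s}^{t}\int_{b_{j}}^{t}\langle\gamma'(v),\gamma'(v)-\gamma'(u)\rangle\,\dd u\,\dd v}\leq\norm{\gamma''}_{L^{\infty}}^{2}\abs{t-s}^{3}\abs{t-b_{j}}$), and what the paper's route buys in exchange is that no derivative of $t\mapsto d(s,t)$ is ever taken, so the cut locus of $\sphere_{L}$, which forces your ``a.e.'' qualifier and the integration of the gradient along segments of each cell, never enters. One repair is needed in your bookkeeping: the cells with $\abs{i-j}=1$ do not drop out of your decomposition --- only your third sum vanishes there, because $K_{n}(i,i\pm1)=0$ --- and on those cells the bound $\abs{\nabla K}\leq C/d$ is useless since $d(s,t)$ vanishes at a corner of the cell; instead, $\abs{K}\leq M$ and the boundedness of $\abs{D_{n}(i,i\pm1)^{-2}-d(b_{i},b_{i\pm1})^{-2}}$ give $O(n^{-2})$ per such cell, an additional and harmless $O(1/n)$ in total.
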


This is in accordance with the data from \cite{Kim1993a}, which suggests that the order of convergence should be roughly $1$. If we do not assume any regularity we might not be able to control the order of
convergence but we still know that the energies converge:

\begin{corollary}[Convergence of M\"obius energies of inscribed polygons]\label{convergencemoebiusinscribedpolygons}
	Let $\gamma\in\mathcal{C}$ with $\mathcal{E}(\gamma)<\infty$ and $p_{n}$ as in Proposition \ref{orderofconvergence}. Then $\lim_{n\to\infty}\mathcal{E}_{n}(p_{n})=\mathcal{E}(\gamma)$.
\end{corollary}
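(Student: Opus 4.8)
The plan is to avoid any regularity assumption on $\gamma$ by splitting both energies at a diagonal threshold $\delta>0$ and exploiting that the integrand $f(s,t)\vcentcolon=\abs{\gamma(t)-\gamma(s)}^{-2}-d(t,s)^{-2}$ is pointwise nonnegative, since the chord never exceeds the arc. Writing $\mathcal{E}(\gamma)=\mathcal{E}^{<\delta}(\gamma)+\mathcal{E}^{\ge\delta}(\gamma)$ according to whether $d(s,t)<\delta$ or $d(s,t)\ge\delta$, and splitting $\mathcal{E}_{n}(p_{n})=\mathcal{E}_{n}^{<\delta}(p_{n})+\mathcal{E}_{n}^{\ge\delta}(p_{n})$ according to whether $d(b_{i},b_{j})<\delta$ or $\ge\delta$, I would first record two consequences of finite energy: by dominated convergence $\mathcal{E}^{<\delta}(\gamma)\to 0$ as $\delta\to 0$, and, by the characterisation recalled in the introduction, $\gamma$ is embedded, so on $\{d(s,t)\ge\delta\}$ the chord $\abs{\gamma(s)-\gamma(t)}$ is bounded below by a positive constant and $f$ is bounded and continuous. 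By scale invariance of $\mathcal{E}_{n}$ I may normalise $p_{n}$ to the length of $\gamma$.

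For the \emph{far part} at fixed $\delta$ I would argue by Riemann sums. Denote by $\ell_{ij}$ the intrinsic polygonal distance between the vertices $\gamma(b_{i}),\gamma(b_{j})$, which is exactly the metric $d$ appearing in \eqref{discreteMoebiusenergy}, and note $p_{n}(a_{i})=\gamma(b_{i})$; the summand is $\abs{\gamma(b_{i})-\gamma(b_{j})}^{-2}-\ell_{ij}^{-2}$ weighted by $\abs{\gamma(b_{i+1})-\gamma(b_{i})}\,\abs{\gamma(b_{j+1})-\gamma(b_{j})}\sim n^{-2}$. As the mesh tends to zero, $\ell_{ij}\to d(b_{i},b_{j})$ by rectifiability, the chord terms are bounded and continuous on the far region, and the whole expression is a Riemann sum, so $\mathcal{E}_{n}^{\ge\delta}(p_{n})\to\mathcal{E}^{\ge\delta}(\gamma)$. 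Dropping the nonnegative near part already gives the lower bound $\liminf_{n}\mathcal{E}_{n}(p_{n})\ge\mathcal{E}^{\ge\delta}(\gamma)\to\mathcal{E}(\gamma)$ as $\delta\to 0$, without recourse to Proposition \ref{liminfmoebius}.

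The decisive point is the \emph{near-diagonal part}, and this is where I expect the main obstacle. The structural observation is a pointwise domination: since the chord is shorter than the polygonal arc $\ell_{ij}$, which is in turn shorter than the arc $d(b_{i},b_{j})$ along $\gamma$, each summand satisfies $0\le\abs{\gamma(b_{i})-\gamma(b_{j})}^{-2}-\ell_{ij}^{-2}\le f(b_{i},b_{j})$. Hence $\mathcal{E}_{n}^{<\delta}(p_{n})$ is controlled by the edge-weighted sum of $f(b_{i},b_{j})$ over the near-diagonal index pairs. Because the summation stays off the singularity (adjacent pairs even contribute zero, so the nearest sampled pair has separation $\ge 2c/n$), this dominating sum should converge to $\mathcal{E}^{<\delta}(\gamma)$. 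The hard part is justifying this convergence, since $f$ has an integrable singularity along the diagonal: I would isolate an innermost band $\{d<\bar c/n\}$, whose continuous integral vanishes as $n\to\infty$ by absolute continuity of the integral, and invoke ordinary Riemann-sum convergence on the remaining region $\{\bar c/n\le d<\delta\}$, where $f$ is bounded. This yields $\limsup_{n}\mathcal{E}_{n}^{<\delta}(p_{n})\le\mathcal{E}^{<\delta}(\gamma)$.

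Combining the two regimes concludes the proof: for every fixed $\delta$,
\[
\limsup_{n}\mathcal{E}_{n}(p_{n})\le\limsup_{n}\mathcal{E}_{n}^{<\delta}(p_{n})+\limsup_{n}\mathcal{E}_{n}^{\ge\delta}(p_{n})\le\mathcal{E}^{<\delta}(\gamma)+\mathcal{E}^{\ge\delta}(\gamma)=\mathcal{E}(\gamma),
\]
while the far-part estimate together with $\mathcal{E}^{<\delta}(\gamma)\to 0$ gives $\liminf_{n}\mathcal{E}_{n}(p_{n})\ge\mathcal{E}(\gamma)$. The two bounds meet, so $\mathcal{E}_{n}(p_{n})\to\mathcal{E}(\gamma)$. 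Note that this route is self-contained given finite energy and does not need the quantitative rate of Proposition \ref{orderofconvergence}; the chord–arc domination replaces the $C^{1,1}$ control used there, and the entire difficulty is concentrated in the uniform-in-$n$ handling of the diagonal singularity described above.
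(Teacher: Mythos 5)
Your overall architecture has several sound pieces: the chord--arc domination $0\le\abs{\gamma(b_i)-\gamma(b_j)}^{-2}-\ell_{ij}^{-2}\le f(b_i,b_j)$ (this is exactly \eqref{inequaltyab}), the observation that adjacent pairs contribute zero, the nonnegativity-based lower bound, and the fixed-$\delta$ far-field Riemann-sum argument (where $f$ is genuinely bounded and continuous) are all fine. The gap sits exactly at the point you yourself call the hard part: the step ``invoke ordinary Riemann-sum convergence on the remaining region $\{\bar c/n\le d<\delta\}$, where $f$ is bounded'' is not available. That region depends on $n$ and its inner boundary approaches the diagonal; for a curve that is merely of finite energy ($\gamma'\in W^{1/2,2}$, not Lipschitz), $f$ is in general unbounded near the diagonal, so on $\{\bar c/n\le d<\delta\}$ it is bounded only by a constant that blows up like $n^{2}$. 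Riemann-sum errors are controlled by the oscillation of the integrand on each cell, and the only pointwise control finite energy provides is the bi-Lipschitz bound $f\lesssim\bConst^{2}/d^{2}$; on the $\sim n$ cells at distance $\sim k/n$ from the diagonal the oscillation can therefore only be estimated by $\sim n^{2}/k^{2}$, giving a total error bound of order $\sum_{k}n\cdot n^{-2}\cdot n^{2}/k^{2}\sim n$, which diverges. (It is precisely to avoid this that Proposition \ref{orderofconvergence} assumes $C^{1,1}$: there \eqref{differencesquareddistances} yields the uniform bound $f\le\bConst^{2}K^{2}$ and the near-diagonal region becomes harmless.) So, as written, the decisive estimate $\limsup_{n}\mathcal{E}_{n}^{<\delta}(p_{n})\le\mathcal{E}^{<\delta}(\gamma)$ is unproved.

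The paper closes exactly this hole by a different mechanism, which you could substitute for your Riemann-sum step. By \eqref{estimatebyfractionalsobolevseminorm} one writes $\abs{b_j-b_i}^{2}-\abs{\gamma(b_j)-\gamma(b_i)}^{2}=\tfrac12\int_{b_i}^{b_j}\int_{b_i}^{b_j}\abs{\gamma'(v)-\gamma'(u)}^{2}\,\dd u\,\dd v$, and then shifts the weight cell of the pair $(i,j)$ to $[b_{i-1},b_i]\times[b_j,b_{j+1}]$, so that $[b_i,b_j]\subset[s,t]$ for every $(s,t)$ in the cell and the cells of distinct pairs are pairwise disjoint. With \eqref{inequaltyab} and \eqref{comparebs} this dominates each near-diagonal summand by a fixed constant times $\int\!\!\int_{\mathrm{cell}}\big(\abs{\gamma(t)-\gamma(s)}^{-2}-\abs{t-s}^{-2}\big)\,\dd s\,\dd t$; this is \eqref{localestimatediscretemoebiusenergy}. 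By disjointness, the entire near sum is at most a constant times the continuous energy integral over a band whose measure tends to zero, and hence tends to zero by absolute continuity of the finite energy integral, \eqref{estimatemeasurebs} --- no pointwise bound on $f$ is ever used. Moreover, the paper's cut-off is $n$-dependent, $N_n/n=\max\{\epsilon_n^{1/4},n^{-1/6}\}\to 0$, tuned to the near-diagonal $W^{3/2,2}$-seminorm $\epsilon_n$ (finite by the characterisation of finite energy quoted in the introduction), so that the far region can be treated by the quantitative terms $A_{i,j},B_{i,j},C_{i,j},D_{i,j}$ of Proposition \ref{orderofconvergence} with $K^{2}$ replaced by $\epsilon_n$-quantities. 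If you replace your near-diagonal step by this disjoint-cell domination (or an equivalent estimate based on the $W^{3/2,2}$ structure), the rest of your fixed-$\delta$ decomposition goes through.
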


Using the results from \cite{Gabor1966a}\footnote{It seems that given name and family name of the author of this paper are interchanged, as L{{\"u}}k{\H{o}} is the family name, see also \cite[Footnote 4]{Abrams2003a}}
that are also immanent in \cite{Abrams2003a} we easily get the following result concerning discrete minimizers for all $n\in\N$ in contrast to the situation for the minimum distance energy, where
the analogous result is by now only known for $n\leq 5$, see \cite{Tam2006a,Speller2008a}.

\begin{lemma}[Regular $n$-gon is unique minimizer of $\mathcal{E}_{n}$ in $\mathcal{P}_{n}$]\label{discreteminimizermoebius}
	The unique minimizer of $\mathcal{E}_{n}$ in $\mathcal{P}_{n}$ is a regular $n$-gon.
\end{lemma}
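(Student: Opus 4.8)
\emph{Proof plan.} Since $\mathcal{E}_{n}$ is scale invariant I may fix the length to be $1$ and work with the vertices $v_{i}\vcentcolon=p(a_{i})$ of an equilateral closed $n$-gon, so that $d(a_{i+1},a_{i})=1/n$ and $d(a_{i},a_{j})=\tfrac{1}{n}\min\{\abs{i-j},n-\abs{i-j}\}$ are \emph{fixed} for every polygon in $\mathcal{P}_{n}$. Consequently
\begin{align*}
	\mathcal{E}_{n}(p)=\frac{1}{n^{2}}\sum_{\substack{i,j=1\\i\not=j}}^{n}\frac{1}{\abs{v_{i}-v_{j}}^{2}}-c_{n},
	\qquad c_{n}\vcentcolon=\frac{1}{n^{2}}\sum_{\substack{i,j=1\\i\not=j}}^{n}\frac{1}{d(a_{i},a_{j})^{2}},
\end{align*}
where $c_{n}$ depends only on $n$. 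Hence minimising $\mathcal{E}_{n}$ over $\mathcal{P}_{n}$ is equivalent to minimising $F(p)\vcentcolon=\sum_{i\not=j}\abs{v_{i}-v_{j}}^{-2}$, and it suffices to show that $F$ is uniquely minimised by the regular $n$-gon.

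First I would split $F$ according to the index gap: writing indices modulo $n$,
\begin{align*}
	F(p)=\sum_{k=1}^{n-1}S_{k}(p),\qquad S_{k}(p)\vcentcolon=\sum_{i=1}^{n}\frac{1}{\abs{v_{i+k}-v_{i}}^{2}}.
\end{align*}
Because $t\mapsto 1/t$ is convex on $(0,\infty)$, Jensen's inequality applied to $y_{i}\vcentcolon=\abs{v_{i+k}-v_{i}}^{2}$ yields $S_{k}(p)\geq n^{2}\big(\sum_{i}\abs{v_{i+k}-v_{i}}^{2}\big)^{-1}$, with equality if and only if all gap-$k$ chords have the same length. It therefore remains to bound the squared-chord sums $\sigma_{k}(p)\vcentcolon=\sum_{i}\abs{v_{i+k}-v_{i}}^{2}$ from above.

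This last bound is exactly the discrete L\"uk\H{o} inequality immanent in \cite{Gabor1966a,Abrams2003a}: among all closed equilateral $n$-gons of length $1$, $\sigma_{k}$ is maximised by the regular $n$-gon. I would prove it via the discrete Fourier transform of the edge vectors $e_{i}\vcentcolon=v_{i+1}-v_{i}$, namely $\hat e_{\omega}\vcentcolon=\sum_{i}e_{i}\zeta^{-i\omega}$ with $\zeta=\ee^{2\pi\ii/n}$. Closure gives $\hat e_{0}=0$ and the equilateral condition gives $\sum_{i}\abs{e_{i}}^{2}=1/n$, whence $\sum_{\omega}\abs{\hat e_{\omega}}^{2}=1$ by Parseval; since $v_{i+k}-v_{i}=\sum_{l=0}^{k-1}e_{i+l}$, Parseval also yields $\sigma_{k}(p)=\tfrac{1}{n}\sum_{\omega=1}^{n-1}\abs{\hat e_{\omega}}^{2}\,\sin^{2}(\pi k\omega/n)/\sin^{2}(\pi\omega/n)$. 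The crux is the trigonometric fact that the multiplier $\sin^{2}(\pi k\omega/n)/\sin^{2}(\pi\omega/n)$ is maximal at $\omega\in\{1,n-1\}$; granting this, $\sigma_{k}(p)$ is maximised by concentrating all Fourier mass in those two modes, i.e.\ by the regular $n$-gon. Combining with Jensen gives $S_{k}(p)\geq S_{k}(g_{n})$ for every $k$, and summing over $k$ gives $F(p)\geq F(g_{n})$.

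For uniqueness it is enough to use the term $k=2$, where the multiplier simplifies to $4\cos^{2}(\pi\omega/n)$ and is \emph{strictly} maximal only at $\omega\in\{1,n-1\}$. Thus $F(p)=F(g_{n})$ forces $\hat e_{\omega}=0$ for all $\omega\notin\{1,n-1\}$, and together with the equilateral constraint $\abs{e_{i}}=1/n$ this pins down the planar regular $n$-gon (up to the rigid motions and reflections under which $F$ is invariant). The only genuine obstacle is the trigonometric inequality for the Fourier multiplier; as this is precisely L\"uk\H{o}'s result, I would simply cite it rather than reprove it.
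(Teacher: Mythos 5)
Your proposal shares the paper's skeleton exactly: decompose $F(p)=\sum_{i\not=j}\abs{v_{i}-v_{j}}^{-2}$ by index gap $k$, use convexity (your Jensen step is the paper's double AM--GM, with the same equality condition ``all gap-$k$ chords equal''), and reduce everything to L\"uk\H{o}'s theorem that the regular $n$-gon maximizes each squared-chord sum $\sigma_{k}$ among closed equilateral $n$-gons. Where you differ is in the treatment of that key lemma: the paper quotes it as a black box, including its equality characterization (\cite{Gabor1966a}, Theorem III, and \cite{Abrams2003a}, Lemma 7, for the $\R^{d}$ case: equality forces an affine image of a regular $n$-gon), whereas you sketch the discrete-Fourier proof. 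Your Fourier identity and normalization are correct, and the multiplier inequality you need is indeed equivalent to L\"uk\H{o}'s statement: one direction is your Parseval computation, and conversely, applying the chord-sum theorem to the closed equilateral ``star'' polygon with edges $e_{j}=\tfrac{1}{n}(\cos(2\pi j\omega/n),\sin(2\pi j\omega/n))$, whose Fourier mass sits entirely in modes $\omega$ and $n-\omega$, yields precisely the bound on the multiplier. So citing it is legitimate, and your route buys a more self-contained, quantitative argument; the cost is that you must extract the equality case yourself rather than quote it --- and that is where the gap is.

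The gap is the uniqueness step at $n=4$. You claim that $\hat e_{\omega}=0$ for $\omega\notin\{1,n-1\}$ together with $\abs{e_{i}}=1/n$ ``pins down the planar regular $n$-gon.'' For $n=4$ this is false: any rhombus has edge vectors $u,v,-u,-v$, hence $\hat e_{2}=0$ and all Fourier mass in modes $\{1,3\}$, yet it is a square only if $u\perp v$. Equivalently, by the parallelogram law $\sigma_{2}$ has the \emph{same} value for every rhombus, so equality in the $k=2$ chord bound cannot single out the square. The repair is already inside your proof: equality in $F$ also forces equality in Jensen at $k=2$, i.e.\ the two diagonals are equal, and a rhombus with equal diagonals is a square; this is exactly the role played in the paper by the condition ``all diagonals of the same order have equal length,'' which you stated but then dropped in the final paragraph. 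Separately, for $n\not=4$ your claim is true but not immediate: mode concentration gives $e_{j}=\tfrac{2}{n}\,\mathrm{Re}(\hat e_{1}\zeta^{j})$ (so $p$ is an affine image of the regular $n$-gon), and constancy of $\abs{e_{j}}^{2}$ produces a second-harmonic term $\cos(4\pi j/n+\phi)$ whose amplitude must vanish; that conclusion needs the $n$ sample angles to contain at least three distinct values, which holds for $n=3$ and $n\geq 5$ but fails exactly at $n=4$, where the second harmonic aliases to an alternating sequence --- this is why the rhombus escapes. So either run the uniqueness argument with \emph{both} equality conditions (Jensen plus mode concentration), as the paper in effect does, or treat $n=4$ separately.
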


This directly yields the convergence of overall discrete minimizers to the circle, which according to \cite[Corollary 2.2]{Freedman1994a}, is the overall minimizer in $\mathcal{C}$.

\begin{corollary}[Convergence of discrete minimizers to the round circle]\label{convergenceofdiscreteminimizerstoroundcircle}
	Let $p_{n}\in\mathcal{P}_{n}$ bounded in $L^{\infty}$ with $\mathcal{E}_{n}(p_{n})=\inf_{\mathcal{P}_{n}}\mathcal{E}_{n}$.
	Then there is a subsequence with $p_{n_{k}}\to \gamma$ in $W^{1,\infty}(\sphere_{1},\R^{d})$, where $\gamma$ is a round unit circle.
\end{corollary}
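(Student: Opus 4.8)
The plan is to use the explicit description of the minimizers supplied by Lemma \ref{discreteminimizermoebius} and then to show by hand that regular $n$-gons converge to the circle in the strong norm $W^{1,\infty}$. Since the regular $n$-gon is the \emph{unique} minimizer of $\mathcal{E}_{n}$ in $\mathcal{P}_{n}$, each $p_{n}$ is congruent to it; writing $\bar g_{n}$ for the reference regular $n$-gon of length $1$ centred at the origin in the $x_{1}x_{2}$-plane with a prescribed starting vertex and orientation, we have $p_{n}(s)=A_{n}\bar g_{n}(s-\phi_{n})+v_{n}$ for some $A_{n}\in\mathrm{O}(d)$, a phase $\phi_{n}\in\sphere_{1}$ and a translation $v_{n}\in\R^{d}$. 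The whole sequence is thus encoded by the finite-dimensional data $(A_{n},\phi_{n},v_{n})$ over the purely geometric reference family $\bar g_{n}$.

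First I would pass to a subsequence making this data converge. The linear factors $A_{n}$ lie in the compact group $\mathrm{O}(d)$ and the phases $\phi_{n}$ in the compact circle $\sphere_{1}$, so both subconverge; the translations $v_{n}$ are bounded because $p_{n}$ is bounded in $L^{\infty}$ (indeed $\abs{v_{n}}\leq\norm{p_{n}}_{L^{\infty}}+\norm{\bar g_{n}}_{L^{\infty}}$), so they subconverge as well. After relabelling assume $A_{n_{k}}\to A$, $\phi_{n_{k}}\to\phi$ and $v_{n_{k}}\to v$.

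The geometric core is the claim $\bar g_{n}\to c$ in $W^{1,\infty}(\sphere_{1},\R^{d})$, where $c$ denotes the length-$1$ circle in the $x_{1}x_{2}$-plane, parametrised so that its points $c(k/n)$ sit radially over the vertices of $\bar g_{n}$. For the $C^{0}$-part I would bound the pointwise distance between an edge and the arc it subtends (the sagitta) and the discrepancy between the circumradius of $\bar g_{n}$ and $\tfrac{1}{2\pi}$, both of which are $\Oh(n^{-2})$. For the derivative I would use that on each edge $\bar g_{n}'$ is the constant unit chord direction, which by symmetry equals the circle's unit tangent at the midpoint of the corresponding arc; as that arc subtends an angle at most $\pi/n$, the tangent of $c$ varies by $\Oh(n^{-1})$ across the edge, so $\norm{\bar g_{n}'-c'}_{L^{\infty}}=\Oh(n^{-1})$. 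Since precomposition with a phase shift is an isometry of the $W^{1,\infty}$-norm on $\sphere_{1}$, postcomposition with the convergent $A_{n_{k}}$ and addition of the convergent $v_{n_{k}}$ are continuous there, and $c$ is smooth in its shift parameter, these estimates combine to give $p_{n_{k}}\to\gamma:=A\,c(\cdot-\phi)+v$ in $W^{1,\infty}(\sphere_{1},\R^{d})$; as an isometric image of $c$, the limit $\gamma$ is a round unit circle.

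The step I expect to be most delicate is the derivative convergence. The weaker $L^{q}$-convergence to the circle could instead be extracted abstractly from $\Gamma$-convergence (Theorem \ref{maintheoremgammamoebius}) together with the absolute minimality of the circle in $\mathcal{C}$ from \cite{Freedman1994a}, but reaching $W^{1,\infty}$ genuinely requires the quantitative $\Oh(n^{-1})$ tangent estimate, since the piecewise-constant tangent of a polygon matches the continuously turning tangent of a circle only in this averaged sense. This also forces one to choose the phase so that polygonal edges are lined up with circular arcs; keeping track of this alignment, and of the $2$-plane into which $A_{n}$ maps the polygon, is the main bookkeeping burden.
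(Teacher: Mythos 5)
Your proposal is correct and follows essentially the same route as the paper, which derives this corollary directly from Lemma \ref{discreteminimizermoebius}: the minimizers are regular $n$-gons, and (using the $L^{\infty}$-bound to control translations, plus compactness of $\mathrm{O}(d)$ and of the phase) these converge along subsequences to a round unit circle in $W^{1,\infty}$. The paper leaves the quantitative tangent estimate $\norm{\bar g_{n}'-c'}_{L^{\infty}}=\Oh(n^{-1})$ implicit as a ``direct'' consequence, so your filling in of that detail is consistent with, not divergent from, its argument.
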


One of the main differences between the discrete M\"obius energy \eqref{discreteMoebiusenergy} and the minimum distance energy \eqref{MDenergy} is that bounded minimum distance energy avoids double point singularities, 
while for \eqref{discreteMoebiusenergy} this is only true in the limit. This avoidance of singularities enables the proof of the existence of minimizers of the minimum distance energy
\eqref{MDenergy} via the direct method, see \cite{Simon1994a}. This might be harder or even impossible to achieve for the energy \eqref{discreteMoebiusenergy}. Nevertheless, the relation between the discrete 
M\"obius energy \eqref{discreteMoebiusenergy} and the smooth M\"obius energy is more clearly visible than for the minimum distance energy \eqref{MDenergy}, as reflected in Theorem \ref{maintheoremgammamoebius} and 
Corollaries \ref{corollaryconvergenceminmizers}-\ref{convergenceofdiscreteminimizerstoroundcircle}.
\\

\textbf{Acknowledgement}\\
The author thanks H. von der Mosel, for his interest and many useful suggestions and remarks.
Additionally, we are grateful to the anonymous referee, who was very patient with us and improved the proof for Proposition \ref{liminfmoebius} considerably.
Moreover, we thank Ph. Reiter for an enquiry that led to Proposition \ref{orderofconvergence}.
Furthermore, we thank the participants of CAKE $14$ for helpful conversations, especially S. Blatt, E. Denne, Ph. Reiter, A. Schikorra and M. Szuma{\'n}ska.

\section{The $\liminf$ inequality}

To keep notation simple we accommodate for the fact that we deal with closed curves by indicating the shortest distance metric on the circle $\sphere_{1}$ and the corresponding metric on the unit interval 
both by $\abs{\cdot-\cdot}$, in a similar manner we proceed when comparing indices and computing integrals and sums. By $\gConst$ we denote a generic constant that may change from line to line.

\begin{proposition}[The $\liminf$ inequality]\label{liminfmoebius}
	Let $p_{n},\gamma\in\mathcal{C}$ with $p_{n}\to \gamma$ in $L^{1}(\sphere_{1},\R^{d})$. Then
	\begin{align*}
		\mathcal{E}(\gamma)\leq \liminf_{n\to\infty}\mathcal{E}_{n}(p_{n}).
	\end{align*}
\end{proposition}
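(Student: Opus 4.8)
The plan is to write both energies as integrals of \emph{nonnegative} integrands over $\sphere_{1}\times\sphere_{1}$ and then invoke Fatou's lemma, so that no delicate cancellation near the diagonal has to be controlled by hand. First I would reduce to the essential case: passing to a subsequence I may assume $\mathcal{E}_{n}(p_{n})\to\ell\vcentcolon=\liminf_{n}\mathcal{E}_{n}(p_{n})$, and if $\ell=+\infty$ there is nothing to prove, so assume $\ell<\infty$. Since $\mathcal{E}_{n}$ is $+\infty$ outside $\mathcal{P}_{n}$, finiteness forces each $p_{n}$ to be an equilateral $n$-gon, whose vertices $a_{i}^{(n)}$ subdivide $\sphere_{1}$ with $d(a_{i+1}^{(n)},a_{i}^{(n)})=1/n$. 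Being arc length parametrisations, $\gamma$ and all $p_{n}$ are $1$-Lipschitz, hence equicontinuous and, together with $L^{1}$ convergence, equibounded; Arzel\`a--Ascoli then upgrades $p_{n}\to\gamma$ in $L^{1}$ to uniform convergence $\norm{p_{n}-\gamma}_{C^{0}}\to 0$ (each uniformly convergent subsequence has limit $\gamma$, so the whole sequence converges).

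The crucial structural observation is nonnegativity. For any arc length curve the chord never exceeds the subtended arc, so $\abs{\gamma(t)-\gamma(s)}\leq d(t,s)$, and likewise $\abs{p_{n}(a_{j}^{(n)})-p_{n}(a_{i}^{(n)})}\leq d(a_{j}^{(n)},a_{i}^{(n)})$ for polygon vertices; hence both the smooth integrand $f(s,t)\vcentcolon=\abs{\gamma(t)-\gamma(s)}^{-2}-d(t,s)^{-2}$ and its discrete counterpart are $\geq 0$. I would encode the discrete energy as $\mathcal{E}_{n}(p_{n})=\int_{\sphere_{1}}\int_{\sphere_{1}}f_{n}$, where $f_{n}$ is the step function equal to $\abs{p_{n}(a_{j}^{(n)})-p_{n}(a_{i}^{(n)})}^{-2}-d(a_{j}^{(n)},a_{i}^{(n)})^{-2}$ on the off-diagonal rectangle $[a_{i}^{(n)},a_{i+1}^{(n)})\times[a_{j}^{(n)},a_{j+1}^{(n)})$ with $i\neq j$, and equal to $0$ on the diagonal rectangles. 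Because the mesh is uniform, each rectangle has area $1/n^{2}=d(a_{i+1}^{(n)},a_{i}^{(n)})\,d(a_{j+1}^{(n)},a_{j}^{(n)})$, so the integral reproduces the sum exactly.

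Next I would verify $f_{n}\to f$ pointwise for almost every $(s,t)$. Fix $s\neq t$; for large $n$ the rectangle containing $(s,t)$ is off-diagonal, its corner sample points satisfy $a_{i}^{(n)}\to s$ and $a_{j}^{(n)}\to t$, and uniform convergence gives $p_{n}(a_{i}^{(n)})\to\gamma(s)$, $p_{n}(a_{j}^{(n)})\to\gamma(t)$ together with $d(a_{j}^{(n)},a_{i}^{(n)})\to d(t,s)$. Continuity of $x\mapsto\abs{x}^{-2}$ away from $0$ (and divergence to $+\infty$ at points where $\gamma(s)=\gamma(t)$) then yields $f_{n}(s,t)\to f(s,t)$; since the union of the diagonal rectangles shrinks to the null diagonal, this holds off a null set. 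Because $f_{n}\geq 0$, Fatou's lemma finally gives
\[
	\mathcal{E}(\gamma)=\int_{\sphere_{1}}\int_{\sphere_{1}}f(s,t)\,\dd s\,\dd t\leq\liminf_{n\to\infty}\int_{\sphere_{1}}\int_{\sphere_{1}}f_{n}(s,t)\,\dd s\,\dd t=\lim_{n\to\infty}\mathcal{E}_{n}(p_{n})=\ell,
\]
which is the assertion.

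The main obstacle is not any single estimate but organising the argument so that Fatou applies cleanly: one must secure the sign of both integrands via the chord--arc inequality, confirm that the artificial zeros on the shrinking diagonal blocks do not obstruct almost-everywhere convergence, and handle points of self-contact of $\gamma$ where $f=+\infty$. Nonnegativity renders all of these harmless, since at a self-contact the discrete integrand diverges as well and Fatou still forces $\ell=+\infty$. This is precisely what lets us bypass the otherwise delicate analysis of near-diagonal cancellations that a direct term-by-term comparison of the two energies would demand.
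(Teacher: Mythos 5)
Your proposal is correct and takes essentially the same approach as the paper: both rewrite $\mathcal{E}_{n}(p_{n})$ as the integral of a nonnegative step function over $\sphere_{1}\times\sphere_{1}$ and apply Fatou's lemma, using pointwise convergence of these step functions to the smooth integrand away from the diagonal. The only cosmetic difference is that you upgrade the $L^{1}$ convergence to uniform convergence via Arzel\`a--Ascoli, whereas the paper simply passes to a further subsequence converging almost everywhere; both suffice for the Fatou argument, and your explicit verification of the sign of the integrands and of the behaviour at self-contact points fills in details the paper leaves implicit.
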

\begin{proof}
	We assume that the $\liminf$ is finite and thus $p_{n}\in\mathcal{P}_{n}$ for a subsequence.
	Now the proposition is a consequence of Fatou's Lemma, since for a further subsequence and $s\not=t$ holds
	\begin{align*}
		\sum_{\substack{i,j=1\\i\not=j}}^{n}\Big(\frac{1}{\abs{p(a_{j})-p(a_{i})}^{2}}-\frac{1}{\abs{a_{j}-a_{i}}^{2}}\Big)\chi_{[a_{i},a_{i+1})\times[a_{j},a_{j+1})}(s,t)
		\to \frac{1}{\abs{\gamma(t)-\gamma(s)}^{2}}-\frac{1}{\abs{t-s}^{2}}.
	\end{align*}
\end{proof}

This means, for polygons $p_{n}\in\mathcal{P}_{n}$ that are bounded in $L^{\infty}$ and have equibounded energies, we find a subsequence that converges in $C^{0}$ 
to an embedded $W^{1+\frac{1}{2},2}$ curve with finite M\"obius energy.

\section{Approximation of energy for inscribed polygons}

\begin{specialproposition*}[Order of convergence for M\"obius energy of inscribed polygon]
	Let $\gamma\in C^{1,1}(\sphere_{L},\R^{d})$ be parametrised by arc length, $c,\overline c>0$. Then for every $\epsilon\in (0,1)$ there is a $C_{\epsilon}>0$ such that 
	\begin{align*}
		\abs{\mathcal{E}(\gamma)-\mathcal{E}_{n}(p_{n})}\leq \frac{C_{\epsilon}}{n^{1-\epsilon}}
	\end{align*}
	for every inscribed polygon $p_{n}$ given by a subdivision $b_{k}$, $k=1,\ldots,n$ of $\sphere_{L}$ such that 
	\begin{align}\label{segmentsareapproximatelyequal}
		\frac{c}{n}\leq \min_{k=1,\ldots,n}\abs{\gamma(b_{k+1})-\gamma(b_{k})} \leq \max_{k=1,\ldots,n}\abs{\gamma(b_{k+1})-\gamma(b_{k})}\leq \frac{\overline c}{n}.
	\end{align}
\end{specialproposition*}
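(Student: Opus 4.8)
The plan is to localise both energies to the grid of boxes $B_{ij}\vcentcolon=[b_i,b_{i+1})\times[b_j,b_{j+1})$ induced by the subdivision and to estimate the resulting error box by box. Writing $F(s,t)\vcentcolon=\abs{\gamma(t)-\gamma(s)}^{-2}-d(t,s)^{-2}$ for the smooth integrand, $h_k\vcentcolon=\abs{b_{k+1}-b_k}$ for the arc length and $\ell_k\vcentcolon=\abs{\gamma(b_{k+1})-\gamma(b_k)}$ for the chord length of the $k$-th segment, and $G_{ij}\vcentcolon=\abs{\gamma(b_j)-\gamma(b_i)}^{-2}-d(a_j,a_i)^{-2}$ for the discrete integrand, I would first observe that the adjacent discrete terms vanish (for $\abs{i-j}=1$ the chord equals the intrinsic polygonal distance, so $G_{i,i+1}=0$) and that no discrete term sits on the diagonal. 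Hence
\begin{align*}
\mathcal{E}(\gamma)-\mathcal{E}_{n}(p_{n})=\sum_{\abs{i-j}\leq 1}\int_{B_{ij}}F(s,t)\,\dd s\,\dd t+\sum_{\abs{i-j}\geq 2}\Big(\int_{B_{ij}}F(s,t)\,\dd s\,\dd t-G_{ij}\ell_{i}\ell_{j}\Big).
\end{align*}
It is essential to keep the two summands of $F$ together, since each is separately non-integrable across the diagonal and only their difference is bounded.

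The analytic heart is a set of expansions coming from $\gamma\in C^{1,1}$ parametrised by arc length. From $\gamma(t)-\gamma(s)=\gamma'(s)(t-s)+\int_s^t(\gamma'-\gamma'(s))$, together with $\abs{\gamma'}\equiv 1$ and $\gamma'(s)\cdot(\gamma'(\tau)-\gamma'(s))=-\tfrac12\abs{\gamma'(\tau)-\gamma'(s)}^2$, one gets $\abs{\gamma(t)-\gamma(s)}^2=\abs{t-s}^2\bigl(1+\Oh(\abs{t-s}^2)\bigr)$ for small $\abs{t-s}$, whence $F$ is bounded near the diagonal by a multiple of $\mathrm{Lip}(\gamma')^2$, and, differentiating, the leading $\abs{t-s}^{-3}$ singularities cancel to give $\abs{\nabla F(s,t)}\leq\gConst\,\abs{t-s}^{-1}$ almost everywhere there. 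The same expansion yields the arc--chord defect $h_k-\ell_k=\Oh(h_k^3)$ and, after summation over the segments separating $b_i$ from $b_j$, the metric defect $\abs{F(b_i,b_j)-G_{ij}}\leq\gConst\,\abs{b_i-b_j}^{-2}n^{-2}$. Away from the diagonal, where embeddedness of $\gamma$---which I may assume, as otherwise $\mathcal{E}(\gamma)=\infty$ and nothing is to be proved---keeps $\abs{\gamma(t)-\gamma(s)}$ bounded below, $F$ is genuinely Lipschitz with $\abs{\nabla F}\leq\gConst$.

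With these ingredients I would decompose each off-diagonal box error into a quadrature error $\int_{B_{ij}}F-F(b_i,b_j)h_ih_j$, a weight error $F(b_i,b_j)(h_ih_j-\ell_i\ell_j)$, and a metric error $(F(b_i,b_j)-G_{ij})\ell_i\ell_j$, and sum each. The hypothesis $c/n\leq\ell_k\leq\overline c/n$ forces $h_k\sim\ell_k\sim 1/n$, so a separation of $m\vcentcolon=\abs{i-j}\geq 2$ segments corresponds to $\abs{t-s}\sim m/n$ throughout $B_{ij}$; the left-endpoint quadrature error on such a box is then at most $\gConst\,(\text{area})\cdot(\text{diameter})\cdot\sup\abs{\nabla F}\leq\gConst\,n^{-2}\cdot n^{-1}\cdot(n/m)=\gConst\,n^{-2}m^{-1}$. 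Since there are $\Oh(n)$ boxes per separation, summing over $2\leq m\lesssim n$ gives the harmonic bound $\gConst\,n^{-1}\sum_m m^{-1}\leq\gConst\,n^{-1}\log n$. The weight errors sum to $\Oh(n^{-1})$ (as $\abs{h_ih_j-\ell_i\ell_j}=\Oh(n^{-4})$), the metric errors to $\gConst\,n^{-1}\sum_m m^{-2}=\Oh(n^{-1})$, the diagonal and adjacent integrals to $\sum_{\abs{i-j}\leq 1}\int_{B_{ij}}\abs{F}\leq\gConst\,n\cdot n^{-2}=\Oh(n^{-1})$ by boundedness of $F$, and the far region to $\Oh(n^2\cdot n^{-3})=\Oh(n^{-1})$ via the uniform Lipschitz bound. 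Altogether $\abs{\mathcal{E}(\gamma)-\mathcal{E}_n(p_n)}\leq\gConst\,n^{-1}\log n$, and absorbing the logarithm into $n^{\epsilon}$ yields the claimed $C_\epsilon\,n^{-(1-\epsilon)}$.

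The main obstacle is precisely the near-diagonal quadrature estimate: here $F$ stays bounded but $\nabla F$ blows up like $\abs{t-s}^{-1}$, so only the exact cancellation of the cubic singularities in the derivatives of the two terms of $F$---which rests on differentiating the $C^{1,1}$ expansion of $\abs{\gamma(t)-\gamma(s)}^2$ almost everywhere---keeps the per-separation error at the level $n^{-2}m^{-1}$ and thus the total at $n^{-1}\log n$ rather than something larger. Establishing these expansions rigorously under the mere $C^{1,1}$ hypothesis, and verifying that the weight, metric, diagonal and far-region contributions are genuinely subdominant, is the technical bulk of the argument; by contrast the logarithmic harmonic sum is exactly what forces the loss of $n^{\epsilon}$ and rules out a clean first-order rate.
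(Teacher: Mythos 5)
Your proposal is correct and takes essentially the same route as the paper's proof: the same telescoping of each off-diagonal box error into a quadrature part, a weight part ($h_ih_j$ versus $\ell_i\ell_j$) and a metric part ($d(b_i,b_j)$ versus $d(a_i,a_j)$) --- these are exactly the paper's $A_{i,j}+B_{i,j}$, $C_{i,j}$ and $D_{i,j}$ --- with the same per-box orders $n^{-2}m^{-1}$, $n^{-4}$, $n^{-2}m^{-2}$, the same near-diagonal treatment by boundedness of the integrand, and the same harmonic sum absorbed into $n^{\epsilon}$. The only substantive difference is the implementation of the quadrature estimate: you apply the mean value inequality with the gradient bound $\abs{\nabla F}\leq \gConst/\abs{t-s}$ coming from the cancellation of the cubic singularities, whereas the paper exploits the identical cancellation by writing $F$ as the quotient of $\tfrac12\int\int\abs{\gamma'(u)-\gamma'(v)}^2\,\dd u\,\dd v$ over $\abs{\gamma(t)-\gamma(s)}^2\abs{t-s}^2$ and estimating the numerator and denominator differences separately (its terms $A_{i,j}$ and $B_{i,j}$).
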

\begin{proof}
	By \cite[Lemma 2.1]{Blatt2012a} we know that there is a constant $\bConst>0$ with $\abs{t-s}\leq \bConst\abs{\gamma(t)-\gamma(s)}$. Choose $N\vcentcolon= 4\bConst\,\frac{\overline c}{c}$.
	Let $p$ be an inscribed polygon given by $\gamma(b_{i})$ with $\gamma(b_{0})=\gamma(b_{n})$, $\gamma(b_{1})=\gamma(b_{n+1})$, $b_{0}=0$. Then the arc length parameters of the polygon are 
	$a_{i}=\sum_{k=1}^{i}\abs{\gamma(b_{k})-\gamma(b_{k-1})}$.
	Note, that 
	\begin{align}\label{inequaltyab}
		\abs{b_{j}-b_{i}}\geq \abs{a_{j}-a_{i}}=\sum_{k=\min\{i+1,j+1\}}^{\max\{i,j\}}\abs{\gamma(b_{k})-\gamma(b_{k-1})}\geq \abs{\gamma(b_{j})-\gamma(b_{i})}
		\geq \bConst^{-1} \abs{b_{j}-b_{i}}.
	\end{align}
	For $t\in [b_{j},b_{j+1}]$ and $s\in [b_{i},b_{i+1}]$ holds
	\begin{align}\label{estimatestrgmanycs1}
		\begin{split}
			\MoveEqLeft
			\abs{t-s}\leq \abs{b_{j}-b_{i}}+2\max_{k=1,\ldots,n}\abs{b_{k+1}-b_{k}}\\
			&\stackrel{\text{\eqref{segmentsareapproximatelyequal}}}{\leq}  \abs{b_{j}-b_{i}}+2\bConst\,\frac{\overline c}{c}\min_{k=1,\ldots,n}\abs{a_{k+1}-a_{k}}
			\stackrel{\text{\eqref{inequaltyab}}}{\leq} \Big(1+2\bConst\frac{\overline c}{c}\Big)\abs{b_{j}-b_{i}}.
		\end{split}
	\end{align}
	Additionally, we have
	\begin{align}\label{estimatestrgmanycs}
		\begin{split}
			\MoveEqLeft
			\abs{t-s}\geq \abs{b_{j}-b_{i}}-2\max_{k=1,\ldots,n}\abs{b_{k+1}-b_{k}}\\
			&\stackrel{\text{\eqref{segmentsareapproximatelyequal}}}{\geq} \abs{j-i}\min_{k=1,\ldots,n}\abs{b_{k+1}-b_{k}}-2\bConst\,\frac{\overline c}{c}\min_{k=1,\ldots,n}\abs{b_{k+1}-b_{k}}\\
			&\stackrel{\text{\eqref{segmentsareapproximatelyequal}}}{\geq} \Big(\abs{j-i}-2\bConst\,\frac{\overline c}{c}\Big)\bConst\,\frac{c}{\overline c}\max_{k=1,\ldots,n}\abs{b_{k+1}-b_{k}}
			\geq \bConst\,\frac{c}{2\overline c}\abs{b_{j}-b_{i}}
		\end{split}
	\end{align}
	if $\abs{j-i}\geq N=4\bConst\,\frac{\overline c}{c}$. Furthermore,
	\begin{align}\label{estimatebyfractionalsobolevseminorm}
		\begin{split}
			\MoveEqLeft
			0\leq \abs{t-s}-\abs{\gamma(t)-\gamma(s)}\leq\frac{\abs{t-s}^{2}-\abs{\gamma(t)-\gamma(s)}^{2}}{\abs{t-s}}\\
			&= \frac{\int_{s}^{t}\int_{s}^{t}\abs{\gamma'(v)-\gamma'(u)}^{2}\,\dd u \,\dd v}{2\abs{t-s}}
			\leq \abs{t-s}\int_{s}^{t}\int_{s}^{t}\frac{\abs{\gamma'(v)-\gamma'(u)}^{2}}{\abs{v-u}^{2}}\,\dd u \,\dd v.
		\end{split}
	\end{align}
	\textbf{Step 1}
		Writing $K\vcentcolon=\norm{\gamma''}_{L^{\infty}(\sphere_{L},\R^{d})}$ we estimate
		\begin{align}\label{differencesquareddistances}
			\begin{split}
				\MoveEqLeft
				\abs{t-s}^{2}-\abs{\gamma(t)-\gamma(s)}^{2}=\int_{s}^{t}\int_{s}^{t}(1-\langle \gamma'(u),\gamma'(v) \rangle)\,\dd u\,\dd v\\
				&=\int_{s}^{t}\int_{s}^{t}\Big(1-\Big\langle \gamma'(v)+\int_{v}^{u}\gamma''(\xi)\,\dd \xi,\gamma'(v) \Big\rangle\Big)\,\dd u\,\dd v\\
				&=-\int_{s}^{t}\int_{s}^{t}\int_{v}^{u}\Big\langle\gamma''(\xi),\gamma'(\xi)+\int_{\xi}^{v}\gamma''(\eta)\,\dd\eta \Big\rangle\,\dd \xi\,\dd u\,\dd v\\
				&=-\int_{s}^{t}\int_{s}^{t}\int_{v}^{u}\int_{\xi}^{v}\langle\gamma''(\xi),\gamma''(\eta)\rangle\,\dd\eta\,\dd \xi\,\dd u\,\dd v\\
				&\leq K^{2}\abs{t-s}^{4}.
			\end{split}
		\end{align}
		Then
		\begin{align}\label{localmoebiusenergyestimate}
			\begin{split}
				\MoveEqLeft
				\sum_{i=1}^{n}\sum_{\abs{j-i}\leq N}\int_{b_{j}}^{b_{j+1}}\int_{b_{i}}^{b_{i+1}}\Big(\frac{1}{\abs{\gamma(t)-\gamma(s)}^{2}}-\frac{1}{\abs{t-s}^{2}}\Big)\,\dd s\,\dd t\\
				\leq{}& \bConst^{2}\sum_{i=1}^{n}\sum_{\abs{j-i}\leq N}\int_{b_{j}}^{b_{j+1}}\int_{b_{i}}^{b_{i+1}}\frac{\abs{t-s}^{2}-\abs{\gamma(t)-\gamma(s)}^{2}}{\abs{t-s}^{4}}\,\dd s\,\dd t\\
				\stackrel{\text{\eqref{differencesquareddistances}}}{\leq}{}& \bConst^{2}K^{2}\sum_{i=1}^{n}\sum_{\abs{j-i}\leq N}\abs{b_{j+1}-b_{j}}\abs{b_{i+1}-b_{i}}
				\stackrel{\text{\eqref{segmentsareapproximatelyequal}}}{\leq} \bConst^{4}K^{2}LN\frac{\overline c}{n}.
			\end{split}
		\end{align}
	\textbf{Step 2}
		Now
		\begin{align*}
			\MoveEqLeft
			\sum_{i=1}^{n}\sum_{0<\abs{j-i}\leq N}\Big(\frac{1}{\abs{\gamma(b_{j})-\gamma(b_{i})}^{2}}-\frac{1}{\abs{a_{j}-a_{i}}^{2}}\Big)\abs{\gamma(b_{j+1})-\gamma(b_{j})}\abs{\gamma(b_{i+1})-\gamma(b_{i})}\\
			\stackrel{\text{\eqref{inequaltyab}}}{\leq}{}& \sum_{i=1}^{n}\sum_{0<\abs{j-i}\leq N}\Big(\frac{1}{\abs{\gamma(b_{j})-\gamma(b_{i})}^{2}}-\frac{1}{\abs{b_{j}-b_{i}}^{2}}\Big)
			\abs{\gamma(b_{j+1})-\gamma(b_{j})}\abs{\gamma(b_{i+1})-\gamma(b_{i})}\\
			\leq {}& \bConst^{2}\sum_{i=1}^{n}\sum_{0<\abs{j-i}\leq N}\frac{\abs{b_{j}-b_{i}}^{2}-\abs{\gamma(b_{j})-\gamma(b_{i})}^{2}}{\abs{b_{j}-b_{i}}^{4}}\abs{b_{j+1}-b_{j}}\abs{b_{i+1}-b_{i}}\\
			\stackrel{\text{\eqref{differencesquareddistances}}}{\leq}{}& \bConst^{2}K^{2}\sum_{i=1}^{n}\sum_{0<\abs{j-i}\leq N}\abs{b_{j+1}-b_{j}}\abs{b_{i+1}-b_{i}}
			\stackrel{\text{\eqref{segmentsareapproximatelyequal}}}{\leq}\bConst^{4}K^{2}LN\frac{\overline c}{n}.
		\end{align*}
	\textbf{Step 3}
		From now on let $\abs{j-i}> N$. We have
		\begin{align*}
			\MoveEqLeft[1]
			\Big|\int_{b_{j}}^{b_{j+1}}\int_{b_{i}}^{b_{i+1}}\Big(\frac{\int_{s}^{t}\int_{s}^{t}\langle \gamma'(u),\gamma'(u)-\gamma'(v)\rangle\,\dd u\,\dd v
			-\int_{b_{i}}^{b_{j}}\int_{b_{i}}^{b_{j}}\langle \gamma'(u),\gamma'(u)-\gamma'(v)\rangle\,\dd u\,\dd v}{\abs{\gamma(t)-\gamma(s)}^{2}\abs{t-s}^{2}} \Big)\,\dd s\,\dd t \Big|\\
			&=\vcentcolon A_{i,j}
			= \Big|\int_{b_{j}}^{b_{j+1}}\int_{b_{i}}^{b_{i+1}}\frac{\int_{A}\langle \gamma'(u),\gamma'(u)-\gamma'(v)\rangle\,\dd u\,\dd v
			-\int_{B}\langle \gamma'(u),\gamma'(u)-\gamma'(v)\rangle\,\dd u\,\dd v}{\abs{\gamma(t)-\gamma(s)}^{2}\abs{t-s}^{2}} \,\dd s\,\dd t\Big|
		\end{align*}
		for $A=([s,t]\times[b_{j},t])\cup([b_{j},t]\times[s,t])$, $B=([b_{i},b_{j}]\times[b_{i},s])\cup([b_{i},s]\times[b_{i},b_{j}])$. Now
		\begin{align*}
			\MoveEqLeft
			\Big|\int_{s}^{t}\int_{b_{j}}^{t}\langle \gamma'(v),\gamma'(v)-\gamma'(u) \rangle\,\dd u \,\dd v\Big|\\
			&=\Big|\int_{s}^{t}\int_{b_{j}}^{t}\int_{u}^{v} \Big\langle \gamma'(x)+\int_{x}^{v}\gamma''(y)\,\dd y,\gamma''(x) \Big\rangle \,\dd x\,\dd u \,\dd v\Big|\\
			&=\Big|\int_{s}^{t}\int_{b_{j}}^{t}\int_{u}^{v}\int_{x}^{v} \langle \gamma''(y),\gamma''(x) \rangle \,\dd y\,\dd x\,\dd u \,\dd v\Big|\\
			&\leq K^{2}\abs{t-s}^{3}\abs{t-b_{j}},
		\end{align*}
		can be used to obtain
		\begin{align*}
			\MoveEqLeft
			A_{i,j}\leq 2K^{2}\int_{b_{j}}^{b_{j+1}}\int_{b_{i}}^{b_{i+1}}\frac{\abs{t-s}^{3}\abs{t-b_{j}}+\abs{b_{j}-b_{i}}^{3}\abs{s-b_{i}}}{\abs{\gamma(t)-\gamma(s)}^{2}\abs{t-s}^{2}} \,\dd s\,\dd t\\
			\stackrel{\text{\eqref{estimatestrgmanycs1},\,\eqref{estimatestrgmanycs}}}{\leq} {}& \gConst \frac{(\max_{k=1,\ldots,n}\abs{b_{k+1}-b_{k}})^{3}}{\abs{b_{j}-b_{i}}}
			\stackrel{\text{\eqref{segmentsareapproximatelyequal}}}{\leq} \gConst \frac{1}{\abs{j-i}n^{2}},
		\end{align*}
		where $\gConst$ is a constant that may change from line to line.\\
	\textbf{Step 4}
		Now,
		\begin{align*}
			\MoveEqLeft
			\Big|\abs{\gamma(b_{j})-\gamma(b_{i})}^{2}-\abs{\gamma(t)-\gamma(s)}^{2}\Big|\\
			= {}&\Big(\abs{\gamma(b_{j})-\gamma(b_{i})}+\abs{\gamma(t)-\gamma(s)}\Big)\Big|\abs{\gamma(b_{j})-\gamma(b_{i})}-\abs{\gamma(t)-\gamma(s)}\Big|\\
			\stackrel{\text{\eqref{estimatestrgmanycs1}}}{\leq} {}& \gConst\abs{b_{j}-b_{i}}\Big(\abs{\gamma(b_{j})-\gamma(t)}+\abs{-\gamma(b_{i})+\gamma(s)}\Big)\\
			\leq {}& \gConst\abs{b_{j}-b_{i}}\max_{k=1,\ldots,n}\abs{b_{k+1}-b_{k}}
		\end{align*}
		and similarly $\big|\abs{b_{j}-b_{i}}^{2}-\abs{t-s}^{2}\big|
			\stackrel{\text{\eqref{estimatestrgmanycs1}}}{\leq} \gConst\abs{b_{j}-b_{i}}\max_{k=1,\ldots,n}\abs{b_{k+1}-b_{k}}$.
		Putting $A=\abs{\gamma(b_{j})-\gamma(b_{i})}$, $B=\abs{b_{j}-b_{i}}$, $a=\abs{\gamma(t)-\gamma(s)}$ and $b=\abs{t-s}$ we find
		\begin{align}\label{estimateaandbs}
			\abs{A^{2}B^{2}-a^{2}b^{2}}\leq \abs{A^{2}-a^{2}}B^{2}+a^{2}\abs{B^{2}-b^{2}}
			\stackrel{\text{\eqref{estimatestrgmanycs1}}}{\leq} \gConst\abs{b_{j}-b_{i}}^{3}\max_{k=1,\ldots,n}\abs{b_{k+1}-b_{k}}.
		\end{align}
		Therefore,
		\begin{align*}
			\MoveEqLeft
			B_{i,j}\vcentcolon=\Big|\int_{b_{j}}^{b_{j+1}}\int_{b_{i}}^{b_{i+1}}\Big(\abs{b_{j}-b_{i}}^{2}-\abs{\gamma(b_{j})-\gamma(b_{i})}^{2}\Big)\\
			&\qquad\Big(\frac{1}{\abs{\gamma(t)-\gamma(s)}^{2}\abs{t-s}^{2}}-\frac{1}{\abs{\gamma(b_{j})-\gamma(b_{i})}^{2}\abs{b_{j}-b_{i}}^{2}}\Big)\,\dd s\,\dd t \Big|\\
			\stackrel{\text{\eqref{differencesquareddistances}}}{\leq} {}& K^{2}\abs{b_{j}-b_{i}}^{4}
			\int_{b_{j}}^{b_{j+1}}\int_{b_{i}}^{b_{i+1}}\frac{\abs{\abs{\gamma(b_{j})-\gamma(b_{i})}^{2}\abs{b_{j}-b_{i}}^{2}-\abs{\gamma(t)-\gamma(s)}^{2}\abs{t-s}^{2}}}{\abs{\gamma(t)-\gamma(s)}^{2}\abs{t-s}^{2}\abs{\gamma(b_{j})-\gamma(b_{i})}^{2}\abs{b_{j}-b_{i}}^{2}}\,\dd s\,\dd t\\
			\stackrel{\text{\eqref{estimateaandbs}}}{\leq} {}& \gConst \abs{b_{j}-b_{i}}^{4}
			\int_{b_{j}}^{b_{j+1}}\int_{b_{i}}^{b_{i+1}}\frac{\abs{b_{j}-b_{i}}^{3}\max_{k=1,\ldots,n}\abs{b_{k+1}-b_{k}}}{\abs{\gamma(t)-\gamma(s)}^{2}\abs{t-s}^{2}\abs{\gamma(b_{j})-\gamma(b_{i})}^{2}\abs{b_{j}-b_{i}}^{2}}\,\dd s\,\dd t\\
			\stackrel{\text{\eqref{estimatestrgmanycs}}}{\leq} {}& \gConst \abs{b_{j}-b_{i}}^{4}
			\int_{b_{j}}^{b_{j+1}}\int_{b_{i}}^{b_{i+1}}\frac{\abs{b_{j}-b_{i}}^{3}\max_{k=1,\ldots,n}\abs{b_{k+1}-b_{k}}}{\abs{b_{j}-b_{i}}^{8}}\,\dd s\,\dd t\\
			\leq \,{}& \gConst \frac{(\max_{k=1,\ldots,n}\abs{b_{k+1}-b_{k}})^{3}}{\abs{b_{j}-b_{i}}}
			\stackrel{\text{\eqref{segmentsareapproximatelyequal}}}{\leq} \gConst \frac{1}{\abs{j-i}n^{2}}.
		\end{align*}
			Since $\sum_{k=1}^{n}\frac{1}{k}-\log(n)$ converges to the Euler-Mascheroni constant, we obtain
		\begin{align*}
			\sum_{\substack{i,j=1\\\abs{j-i}>N}}^{n}(A_{i,j}+B_{i,j})\leq \gConst \frac{1}{n^{2}}\sum_{i=1}^{n}\sum_{k=1}^{n}\frac{1}{k}
			= \gConst \frac{1}{n^{1-\epsilon}}\frac{1}{n^{\epsilon}}\sum_{k=1}^{n}\frac{1}{k}
			\leq \gConst \frac{1}{n^{1-\epsilon}}.
		\end{align*}
	\textbf{Step 5}
		Set
		\begin{align*}
			\MoveEqLeft
			C_{i,j}\vcentcolon=\Big|\Big(\frac{1}{\abs{\gamma(b_{j})-\gamma(b_{i})}^{2}}-\frac{1}{\abs{b_{j}-b_{i}}^{2}}\Big)\abs{b_{j+1}-b_{j}}\abs{b_{i+1}-b_{i}} \\
			&\qquad -\Big(\frac{1}{\abs{\gamma(b_{j})-\gamma(b_{i})}^{2}}-\frac{1}{\abs{b_{j}-b_{i}}^{2}}\Big)\abs{\gamma(b_{j+1})-\gamma(b_{j})}\abs{\gamma(b_{i+1})-\gamma(b_{i})}\Big|.
		\end{align*}
		As in \eqref{differencesquareddistances} we estimate
		\begin{align*}
			\Big|\frac{1}{\abs{\gamma(b_{j})-\gamma(b_{i})}^{2}}-\frac{1}{\abs{b_{j}-b_{i}}^{2}}\Big|
			=\frac{\abs{b_{j}-b_{i}}^{2}-\abs{\gamma(b_{j})-\gamma(b_{i})}^{2}}{\abs{\gamma(b_{j})-\gamma(b_{i})}^{2}\abs{b_{j}-b_{i}}^{2}}
			\leq \bConst^{2}K^{2}.
		\end{align*}
		Putting $A=\abs{b_{i+1}-b_{i}}$, $a=\abs{\gamma(b_{i+1})-\gamma(b_{i})}$, $B=\abs{b_{j+1}-b_{j}}$, $b=\abs{\gamma(b_{j+1})-\gamma(b_{j})}$ we have% the relations
		\begin{align*}
			\frac{A}{\bConst}\leq a\leq A,\quad \frac{B}{\bConst}\leq b\leq B\quad\text{and}\quad \frac{A}{B}\leq \bConst\frac{\overline c}{c},\quad\frac{B}{A}\leq \bConst\frac{\overline c}{c}
		\end{align*}
		at our disposal and can estimate the remaining factor by
		\begin{align*}
			\MoveEqLeft
			AB-ab
			=(A-a)B+a(B-b)
			=\frac{A^{2}-a^{2}}{A+a}B+a\frac{B^{2}-b^{2}}{B+b}
			\leq \frac{A^{2}-a^{2}}{A+\frac{A}{\bConst}}B+A\frac{B^{2}-b^{2}}{B+\frac{B}{\bConst}}\\
			&\leq \frac{\bConst\frac{\overline c}{c}}{1+\frac{1}{\bConst}}(A^{2}-a^{2}+B^{2}-b^{2})
			\stackrel{\text{\eqref{differencesquareddistances}}}{\leq} \frac{2\bConst\frac{\overline c}{c}K^{2}}{1+\frac{1}{\bConst}}(\max\{A,B\})^{4}.
		\end{align*}
		Hence, $\sum_{\substack{i,j=1,\,i\not=j}}^{n}C_{i,j}\leq n^{2}\gConst(\max\{A,B\})^{4}\stackrel{\text{\eqref{segmentsareapproximatelyequal}}}{\leq} \gConst\frac{1}{n^{2}}.$\\
	\textbf{Step 6}
		Without loss of generality we assume $i<j$. Then
		\begin{align*}
			\MoveEqLeft
			\abs{b_{j}-b_{i}}^{2}-\abs{a_{j}-a_{i}}^{2}
			=(\abs{b_{j}-b_{i}}+\abs{a_{j}-a_{i}})\Big(\sum_{k=i}^{j-1}\abs{b_{k+1}-b_{k}}-\sum_{k=i}^{j-1}\abs{a_{k+1}-a_{k}}\Big)\\
			&\stackrel{\text{\eqref{inequaltyab},\eqref{estimatebyfractionalsobolevseminorm}}}{\leq}2K^{2}\abs{b_{j}-b_{i}}^{2}\Big(\max_{k=i,\ldots,j-1}\abs{b_{k+1}-b_{k}}\Big)^{2}.
		\end{align*}
		Hence,
		\begin{align*}
			\MoveEqLeft
			0\stackrel{\text{\eqref{inequaltyab}}}{<}D_{i,j}\vcentcolon=\Big(\frac{1}{\abs{a_{j}-a_{i}}^{2}}-\frac{1}{\abs{b_{j}-b_{i}}^{2}}\Big)\abs{a_{j+1}-a_{j}}\abs{a_{i+1}-a_{i}}\\
			\leq{}& \frac{2K^{2}(\max_{k=i,\ldots,j-1}\abs{b_{k+1}-b_{k}})^{2}}{\abs{a_{j}-a_{i}}^{2}}\abs{a_{j+1}-a_{j}}\abs{a_{i+1}-a_{i}}\\
			\stackrel{\text{\eqref{inequaltyab}}}{\leq}{}&\frac{2K^{2}\bConst^{2}(\max_{k=1,\ldots,n}\abs{a_{k+1}-a_{k}})^{4}}{\abs{a_{j}-a_{i}}^{2}}
			\leq \frac{2K^{2}\bConst^{2}(\max_{k=1,\ldots,n}\abs{a_{k+1}-a_{k}})^{4}}{(\abs{j-i}\min_{k=1,\ldots,n}\abs{a_{k+1}-a_{k}})^{2}}\\
			\stackrel{\text{\eqref{segmentsareapproximatelyequal}}}{\leq}{}& \frac{2K^{2}\bConst^{2}(\frac{\overline c}{c})^{2}}{\abs{j-i}^{2}}\max_{k=1,\ldots,n}\abs{a_{k+1}-a_{k}}^{2}
			\stackrel{\text{\eqref{segmentsareapproximatelyequal}}}{\leq} \frac{2K^{2}\bConst^{2}\overline c^{4}}{c^{2}n^{2}\abs{j-i}^{2}}
		\end{align*}
		and since $\zeta(2)=\frac{\pi^{2}}{6}$ we obtain $0<\sum_{\substack{i,j=1,\,i\not=j}}^{n}D_{i,j}\leq \gConst\frac{1}{n^{2}}n\sum_{k=1}^{n-1}\frac{1}{k^{2}}\leq \gConst \frac{1}{n}$.
\end{proof}

\begin{specialcorollary*}[Convergence of M\"obius energies of inscribed polygons]
	Let $\gamma\in\mathcal{C}$ with $\mathcal{E}(\gamma)<\infty$ and $p_{n}$ as in Proposition \ref{orderofconvergence}. Then $\lim_{n\to\infty}\mathcal{E}_{n}(p_{n})=\mathcal{E}(\gamma)$.
\end{specialcorollary*}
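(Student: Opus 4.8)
The plan is to establish the two matching inequalities separately, taking the lower one essentially for free. Since the mesh satisfies $\max_{k}\abs{\gamma(b_{k+1})-\gamma(b_{k})}\leq\overline c/n\to 0$, the inscribed polygons $p_{n}$ converge uniformly, hence in $L^{1}(\sphere_{1},\R^{d})$, to $\gamma$; as both functionals are scale invariant I may normalise each $p_{n}$ to unit length without changing either side, so Proposition \ref{liminfmoebius} gives $\mathcal{E}(\gamma)\leq\liminf_{n\to\infty}\mathcal{E}_{n}(p_{n})$. It therefore remains to prove the upper bound $\limsup_{n\to\infty}\mathcal{E}_{n}(p_{n})\leq\mathcal{E}(\gamma)$.

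The structural observation I would exploit is that the vertices of $p_{n}$ lie on $\gamma$, so that $\abs{p_{n}(a_{j})-p_{n}(a_{i})}=\abs{\gamma(b_{j})-\gamma(b_{i})}$: the chords of the polygon are genuine chords of $\gamma$. I write $\mathcal{E}_{n}(p_{n})=\int_{\sphere_{1}}\int_{\sphere_{1}}f_{n}\,\dd s\,\dd t$, where $f_{n}\geq 0$ is the step function equal on $[b_{i},b_{i+1})\times[b_{j},b_{j+1})$ to $\big(\abs{\gamma(b_{j})-\gamma(b_{i})}^{-2}-\abs{a_{j}-a_{i}}^{-2}\big)$ times the Jacobian factor $\frac{\abs{a_{i+1}-a_{i}}}{\abs{b_{i+1}-b_{i}}}\frac{\abs{a_{j+1}-a_{j}}}{\abs{b_{j+1}-b_{j}}}\in[\bConst^{-2},1]$. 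The pointwise convergence already used in Proposition \ref{liminfmoebius} then shows $f_{n}\to g$ almost everywhere, where $g(s,t)=\abs{\gamma(t)-\gamma(s)}^{-2}-\abs{t-s}^{-2}\geq 0$ and $\int_{\sphere_{1}}\int_{\sphere_{1}}g=\mathcal{E}(\gamma)<\infty$. Since all functions are nonnegative and the limit is integrable, the upper bound will follow once I show that $\{f_{n}\}$ is uniformly integrable, because then Vitali's theorem upgrades the a.e.\ convergence to $L^{1}$ convergence and $\int f_{n}\to\int g=\mathcal{E}(\gamma)$.

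To control $f_{n}$ I would use the bi-Lipschitz estimate $\abs{t-s}\leq\bConst\abs{\gamma(t)-\gamma(s)}$ of \cite[Lemma 2.1]{Blatt2012a}, valid because $\mathcal{E}(\gamma)<\infty$, together with $\abs{a_{j}-a_{i}}\leq\abs{b_{j}-b_{i}}$ as in \eqref{inequaltyab}, to bound $f_{n}$ on the $(i,j)$-cell by $\tfrac{1}{2}\bConst^{2}\abs{b_{j}-b_{i}}^{-4}\int_{b_{i}}^{b_{j}}\int_{b_{i}}^{b_{j}}\abs{\gamma'(u)-\gamma'(v)}^{2}\,\dd u\,\dd v$. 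Splitting the index range at $\abs{j-i}=N$ as in Proposition \ref{orderofconvergence}, the far part $\abs{j-i}>N$ is harmless: there $\abs{b_{j}-b_{i}}$ is comparable to $\abs{t-s}$ for $(s,t)$ in the cell, so this majorant is controlled by a fixed constant multiple of $g$ on an enlarged interval, and no concentration can occur on the far region.

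The hard part is the near-diagonal band $0<\abs{j-i}\leq N$, where no fixed pointwise majorant exists because $\abs{b_{j}-b_{i}}$ may far exceed $\abs{t-s}$. Here I would argue by uniform integrability rather than domination: the adjacent terms $\abs{j-i}=1$ vanish identically (chord $=$ arc), and for $2\leq\abs{j-i}\leq N$ the corner distance $\abs{b_{j}-b_{i}}$ is again comparable to the running distance $\abs{t-s}$, so the near-diagonal contribution is bounded, uniformly in $n$, by a constant multiple of $\int\!\int_{\{0<\abs{t-s}\leq\rho\}}g$ over a strip of width $\rho\approx N\max_{k}\abs{b_{k+1}-b_{k}}\to 0$. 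By the Fubini identity $\int\!\int g\leq\gConst\int\!\int\frac{\abs{\gamma'(u)-\gamma'(v)}^{2}}{\abs{u-v}^{2}}\,\dd u\,\dd v<\infty$, equivalent to $\gamma'\in W^{\frac{1}{2},2}$ and implied by $\mathcal{E}(\gamma)<\infty$, this strip integral tends to $0$ by absolute continuity of the integral, giving the required uniform integrability. Combining this with the $\liminf$ inequality yields $\mathcal{E}_{n}(p_{n})\to\mathcal{E}(\gamma)$. I expect the main technical obstacle to be exactly the uniform-in-$n$ comparison of the corner-evaluated near-diagonal sum with the continuous strip integral for $2\leq\abs{j-i}\leq N$, since this is where the crude per-cell bounds are too lossy and the finiteness of $\mathcal{E}(\gamma)$, rather than mere square-integrability of $\gamma'$, must genuinely be used.
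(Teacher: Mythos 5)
Your proposal is correct in outline, and on the hard near-diagonal step it coincides with the paper's own argument, while on the far region it takes a genuinely different, softer route. The paper does not split into $\liminf$/$\limsup$ at all: it estimates $\abs{\mathcal{E}(\gamma)-\mathcal{E}_{n}(p_{n})}$ directly, using a cutoff $N_{n}=n\max\{\epsilon_{n}^{1/4},n^{-1/6}\}$ that grows with $n$, where $\epsilon_{n}$ is the diagonal part of the $W^{1/2,2}$-seminorm of $\gamma'$. Near the diagonal it does exactly what you propose: the discrete terms with $1<j-i\leq N_{n}$ are compared with the continuous integrand via the index shift that places the corner square $[b_{i},b_{j}]^{2}$ inside the running square $[s,t]^{2}$ for $(s,t)\in[b_{i-1},b_{i}]\times[b_{j},b_{j+1}]$ (see \eqref{comparebs} and \eqref{localestimatediscretemoebiusenergy}), and then both near-diagonal contributions vanish by absolute continuity of the integral of $g$, since the band has measure tending to zero by \eqref{estimatemeasurebs}. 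So the step you single out as the main obstacle is indeed the crux, and it is resolved exactly as you anticipate. For the far region, however, the paper does not use Vitali: it re-uses the telescoping error terms $A_{i,j},B_{i,j},C_{i,j},D_{i,j}$ from the proof of Proposition \ref{orderofconvergence}, now bounded through the seminorm estimate \eqref{estimatebyfractionalsobolevseminorm} instead of the $C^{1,1}$ bound \eqref{differencesquareddistances}; the growing cutoff $N_{n}$ is tuned precisely so that these far-field sums vanish (with a fixed $N$ they would not, e.g.\ the bound on $\sum A_{i,j}$ diverges), whereas your fixed $N$ is perfectly adequate for your own route, since you never need the far-field discrete and continuous parts to agree term by term. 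What each approach buys: the paper's is uniform with Proposition \ref{orderofconvergence} and quantitative in spirit; yours is shorter in principle, avoids the four-term telescoping and the tuning of $N_{n}$, and makes your $\liminf$ half (and the rescaling to unit length) redundant, since Vitali delivers the full limit at once.

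The one place where your justification is thinner than what is needed is the far region itself. ``A fixed constant multiple of $g$ on an enlarged interval'' is not a fixed majorant: the enlargement depends on the grid, so what you actually have on a far cell $[b_{i},b_{i+1})\times[b_{j},b_{j+1})$ is a bound by $\gConst$ times the average of $g$ over a shifted cell of comparable size, and this dominating function changes with $n$. Dominated convergence therefore does not apply as stated; you must prove uniform integrability of these cell-average majorants. This is true and standard: up to the bounded-Jacobian shift $(i,j)\mapsto(i-1,j)$, they are conditional expectations of the fixed integrable function $g$ with respect to the grid partitions, and conditional expectations of an $L^{1}$ function form a uniformly integrable family (Jensen's inequality combined with the de la Vall\'ee-Poussin criterion). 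With that lemma inserted, your argument closes; without it, ``no concentration can occur on the far region'' is an assertion rather than a proof.
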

\begin{proof}
	Set $\epsilon_{n}\vcentcolon=\sum_{i=1}^{n}\int_{b_{k}}^{b_{k+1}}\int_{b_{k}}^{b_{k+1}}\frac{\abs{\gamma'(v)-\gamma'(u)}^{2}}{\abs{v-u}^{2}}\,\dd u\,\dd v$
	and $N_{n}\vcentcolon= n\max\{\epsilon_{n}^{\frac{1}{4}},n^{-\frac{1}{6}}\}$.
	Then
	\begin{align}\label{estimatemeasurebs}
		\HM^{2}\Big(\bigcup_{\abs{j-i}\leq N_{n}+1}[b_{j},b_{j+1}]\times [b_{i},b_{i+1}]\Big)
		\stackrel{\text{\eqref{inequaltyab}}}{\leq} n 4N_{n}\frac{\bConst^{2}\overline{c}^{2}}{n^{2}}
		= 4\bConst^{2}\overline{c}^{2} \max\{\epsilon_{n}^{\frac{1}{4}},n^{-\frac{1}{6}}\}\xrightarrow[n\to\infty]{} 0.
	\end{align}
	\textbf{Step 1}
		According to \eqref{estimatebyfractionalsobolevseminorm} the local bi-Lipschitz constant can be uniformly chosen as close to $1$ as we wish, 
		so that for $i<j$, $\delta=\frac{1}{1+2\frac{\overline c}{c}}$ and $n\geq M_{\delta}$ we can use \eqref{segmentsareapproximatelyequal} to find
		\begin{align*}
			\MoveEqLeft
			\abs{b_{j+1}-b_{j}}\leq (1-\delta+2\delta)\abs{\gamma(b_{j+1})-\gamma(b_{j})}
			\leq(1-\delta)(\abs{\gamma(b_{j+1})-\gamma(b_{j})}+\abs{\gamma(b_{j})-\gamma(b_{j-1})})\\
			&\leq (1-\delta)(\abs{b_{j+1}-b_{j}}+\abs{b_{j}-b_{j-1}})=(1-\delta)\abs{b_{j+1}-b_{j-1}}
		\end{align*}
		and thus
		\begin{align}\label{comparebs}
			\delta\abs{b_{j+1}-b_{i}}\leq \abs{b_{j+1}-b_{i}}-(1-\delta)\abs{b_{j+1}-b_{j-1}}\leq \abs{b_{j+1}-b_{i}}-\abs{b_{j+1}-b_{j}}
			\leq \abs{b_{j}-b_{i}}.
		\end{align}
		Hence, 
		\begin{align}\label{localestimatediscretemoebiusenergy}
			\begin{split}
				\MoveEqLeft
				\sum_{i=1}^{n}\sum_{0<j-i\leq N_{n}}\Big(\frac{1}{\abs{\gamma(b_{j})-\gamma(b_{i})}^{2}}-\frac{1}{\abs{a_{j}-a_{i}}^{2}}\Big)\abs{\gamma(b_{j+1})-\gamma(b_{j})}\abs{\gamma(b_{i+1})-\gamma(b_{i})}\\
				\stackrel{\text{\eqref{segmentsareapproximatelyequal},\eqref{inequaltyab}}}{\leq} {}& \frac{\overline c}{c}\sum_{i=1}^{n}\sum_{1<j-i\leq N_{n}}
				\Big(\frac{1}{\abs{\gamma(b_{j})-\gamma(b_{i})}^{2}}-\frac{1}{\abs{b_{j}-b_{i}}^{2}}\Big)\abs{\gamma(b_{j+1})-\gamma(b_{j})}\abs{\gamma(b_{i})-\gamma(b_{i-1})}\\
				\leq\;\;\,{}& \bConst^{2}\frac{\overline c}{c}\sum_{i=1}^{n}\sum_{1<j-i\leq N_{n}}\frac{\abs{b_{j}-b_{i}}^{2}-\abs{\gamma(b_{j})-\gamma(b_{i})}^{2}}{\abs{b_{j}-b_{i}}^{4}}\abs{b_{j+1}-b_{j}}\abs{b_{i}-b_{i-1}}\\
				=\;\;\, {}& \bConst^{2}\frac{\overline c}{c}\sum_{i=1}^{n}\sum_{1<j-i\leq N_{n}}\int_{b_{j}}^{b_{j+1}}\int_{b_{i-1}}^{b_{i}}\frac{\int_{b_{i}}^{b_{j}}\int_{b_{i}}^{b_{j}}\abs{\gamma'(v)-\gamma'(u)}^{2}\,\dd u\,\dd v}{2\abs{b_{j}-b_{i}}^{4}}\,\dd s\,\dd t\\
				\stackrel{\text{\eqref{comparebs}}}{\leq}\;\;
				 {}& \delta^{-8}\bConst^{2}\frac{\overline c}{c}\sum_{i=1}^{n}\sum_{1<j-i\leq N_{n}}\int_{b_{j}}^{b_{j+1}}\int_{b_{i-1}}^{b_{i}}\frac{\int_{s}^{t}\int_{s}^{t}\abs{\gamma'(v)-\gamma'(u)}^{2}\,\dd u\,\dd v}{2\abs{t-s}^{4}}\,\dd s\,\dd t\\
				 \leq\;\;\,{}&\delta^{-8}\bConst^{2}\frac{\overline c}{c}\sum_{i=1}^{n}\sum_{1<j-i\leq N_{n}}
				 \int_{b_{j}}^{b_{j+1}}\int_{b_{i-1}}^{b_{i}}\Big(\frac{1}{\abs{\gamma(t)-\gamma(s)}^{2}}-\frac{1}{\abs{t-s}^{2}}\Big)\,\dd s\,\dd t.
			\end{split}
		\end{align}
		The sum over $1<\abs{j-i}\leq N_{n}$ can be estimated analogously.
		According to \eqref{estimatemeasurebs} we know that the left-hand side in \eqref{localmoebiusenergyestimate} with $N_{n}$ instead of $N$ as well as \eqref{localestimatediscretemoebiusenergy} 
		converge to zero for $n\to\infty$.\\		
	\textbf{Step 2}
		If $n$ is large enough for $\abs{j-i}\geq N_{n}$ holds $\abs{\gamma(b_{j})-\gamma(b_{i})}\geq \gConst\frac{N_{n}}{n}$ by \eqref{segmentsareapproximatelyequal}.
		We estimate
		\begin{align*}
			A_{i,j}&\leq \gConst\int_{b_{j}}^{b_{j+1}}\int_{b_{i}}^{b_{i+1}}\frac{2\frac{1}{n}\abs{b_{j+1}-b_{i}}}{\abs{\gamma(t)-\gamma(s)}^{2}\abs{t-s}^{2}}\,\dd s\,\dd t
			\leq \gConst \frac{1}{n^{2}}\frac{1}{n}\Big(\frac{n}{N_{n}}\Big)^{3}
			\leq \gConst \frac{1}{n^{\frac{1}{2}}}\frac{1}{n^{2}},\\
			B_{i,j}&\stackrel{\text{\eqref{estimatebyfractionalsobolevseminorm}}}{\leq} \gConst\frac{1}{n^{2}}\abs{b_{j}-b_{i}}^{2}\epsilon_{n}\frac{1}{\abs{b_{j}-b_{i}}^{4}}
			\leq \gConst \frac{1}{n^{2}}\epsilon_{n}\frac{n^{2}}{N_{n}^{2}}
			\leq \gConst \epsilon_{n}^{\frac{1}{2}}\frac{1}{n^{2}},\\
			C_{i,j}&\stackrel{\text{\eqref{estimatebyfractionalsobolevseminorm}}}{\leq}\gConst \frac{n^{2}}{N_{n}^{2}}\epsilon_{n}\frac{1}{n^{2}}
			\leq \gConst \epsilon_{n}^{\frac{1}{2}}\frac{1}{n^{2}},\\
			D_{i,j}&\stackrel{\text{\eqref{estimatebyfractionalsobolevseminorm}}}{\leq} \gConst \epsilon_{n}\frac{\abs{b_{j}-b_{i}}^{2}}{\abs{a_{j}-a_{i}}^{2}\abs{b_{j}-b_{i}}^{2}}\frac{1}{n^{2}}
			\stackrel{\text{\eqref{inequaltyab}}}{\leq} \gConst \epsilon_{n}\frac{n^{2}}{N_{n}^{2}}\frac{1}{n^{2}}
			\leq \gConst \epsilon_{n}^{\frac{1}{2}}\frac{1}{n^{2}}.
		\end{align*}
		And thus $\sum_{N_{n}<\abs{j-i}}(A_{i,j}+B_{i,j}+C_{i,j}+D_{i,j})\to 0$ for $n\to\infty$.
\end{proof}

\section{The $\limsup$ inequality}

\begin{proposition}[The $\limsup$ inequality]\label{limsupmoebius}
	Let $\gamma\in \mathcal{C}(\mathcal{K})\cap C^{1}$ with $\mathcal{E}(\gamma)<\infty$. Then there are $p_{n}\in \mathcal{P}_{n}(\mathcal{K})$ such that
	\begin{align*}
		p_{n}\xrightarrow[n\to\infty]{W^{1,\infty}(\sphere_{1},\R^{d})} \gamma \qquad\text{and}\qquad
		\lim_{n\to\infty}\mathcal{E}_{n}(p_{n})=\mathcal{E}(\gamma).
	\end{align*}
\end{proposition}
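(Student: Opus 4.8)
The plan is to use rescaled equilateral inscribed polygons as recovery sequence and to obtain the energy convergence directly from Corollary \ref{convergencemoebiusinscribedpolygons}, so that the real work consists in constructing the polygons, checking $W^{1,\infty}$ convergence, and controlling the knot class. Throughout, $\gamma\in\mathcal{C}(\mathcal{K})\cap C^{1}$ with $\mathcal{E}(\gamma)<\infty$ is embedded, and by \cite[Lemma 2.1]{Blatt2012a} there is $\bConst$ with $\abs{t-s}\leq\bConst\abs{\gamma(t)-\gamma(s)}$.

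The first step, which I expect to be the main obstacle, is the existence of equilateral inscribed polygons with mesh control. For a chord length $\ell$ of order $1/n$ and a starting parameter $s$ (which I fix so that $b_{1}=0$) define $b_{k+1}$ iteratively as the first parameter beyond $b_{k}$ with $\abs{\gamma(b_{k+1})-\gamma(b_{k})}=\ell$; by the bi-Lipschitz estimate and the chord/arc ratio this next vertex is, on small scales, unique and depends continuously on $\ell$, with $b_{k+1}-b_{k}$ comparable to $\ell$. A shooting argument in $\ell$, using the intermediate value theorem and the monotone dependence of the total traversed arc length on $\ell$, then yields $\ell_{n}$ for which the $n$-th chord closes up exactly, $b_{n+1}=b_{1}+1$, producing an equilateral inscribed $n$-gon $\tilde p_{n}$ with all chords equal to $\ell_{n}$ and $n\ell_{n}\to 1$; in particular $\tfrac{c}{n}\leq\ell_{n}\leq\tfrac{\overline c}{n}$ for suitable constants and all large $n$.

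With $\tilde p_{n}$ at hand the segment bound \eqref{segmentsareapproximatelyequal} holds trivially, so Corollary \ref{convergencemoebiusinscribedpolygons} applies (here $\mathcal{E}(\gamma)<\infty$ enters) and gives $\mathcal{E}_{n}(\tilde p_{n})\to\mathcal{E}(\gamma)$. Since the polygon length $n\ell_{n}$ need not equal $1$, I set $p_{n}\vcentcolon=(n\ell_{n})^{-1}\tilde p_{n}$, an equilateral polygon of length $1$ with segments $1/n$, so $p_{n}\in\mathcal{P}_{n}$, and by scale invariance of $\mathcal{E}_{n}$ we get $\mathcal{E}_{n}(p_{n})=\mathcal{E}_{n}(\tilde p_{n})\to\mathcal{E}(\gamma)$, which is stronger than the required $\limsup$ inequality. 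For $W^{1,\infty}$ convergence I parametrise $p_{n}$ by arc length on $\sphere_{1}$ with vertices at $(k-1)/n$. Uniform convergence holds because each point of $p_{n}$ lies within $\Oh(1/n)$ of a vertex $\gamma(b_{k})$ while the matching reparametrisation tends to the identity (the chord/arc ratio $\to 1$ uniformly for $\gamma\in C^{1}$ and $n\ell_{n}\to 1$, whence $\sup_{k}\abs{b_{k}-(k-1)/n}\to 0$). On each segment the derivative $p_{n}'$ is the constant chord direction $\tfrac{\gamma(b_{k+1})-\gamma(b_{k})}{\abs{\gamma(b_{k+1})-\gamma(b_{k})}}$, which converges uniformly to $\gamma'$ by uniform continuity of $\gamma'$ as the mesh tends to $0$; the factor $(n\ell_{n})^{-1}\to 1$ is harmless. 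Hence $\norm{p_{n}-\gamma}_{W^{1,\infty}(\sphere_{1},\R^{d})}\to 0$.

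Finally, for the knot class, since $\gamma$ is embedded and $C^{1}$ the straight-line homotopy deforming each subarc $\gamma\rest_{[b_{k},b_{k+1}]}$ onto its chord is, for $n$ large, an ambient isotopy: no self-contacts arise, by embeddedness together with uniform continuity of $\gamma'$ on the fine mesh. Therefore $\tilde p_{n}$, and with it $p_{n}$, represents the tame knot type $\mathcal{K}$, so $p_{n}\in\mathcal{P}_{n}(\mathcal{K})$ for all large $n$, while the finitely many remaining indices are irrelevant for the limit. As with the construction of the equilateral inscribed polygons, this last step is exactly where the $C^{1}$-regularity and embeddedness of $\gamma$ are essential.
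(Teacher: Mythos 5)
Your overall strategy coincides with the paper's: inscribe closed equilateral polygons in $\gamma$, rescale to length $1$, obtain the energy convergence from Corollary \ref{convergencemoebiusinscribedpolygons} together with the scale invariance of $\mathcal{E}_{n}$, and obtain $W^{1,\infty}$ convergence from the uniform continuity of $\gamma'$ (your estimates $\sup_{k}\abs{b_{k}-(k-1)/n}\to 0$, the verification of \eqref{segmentsareapproximatelyequal}, and the convergence of the chord directions are all fine, and parallel Steps 2--5 of the paper). The difference is that the two delicate sub-steps, which the paper settles by citation --- existence of closed equilateral inscribed polygons, via \cite{Wu2004a}, and preservation of the knot class, via the $(d,\tfrac{1}{4})$ diamond property and \cite[Theorem 4.10]{Strzelecki2013b} --- are precisely the places where you improvise, and that is where the gaps are.

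The serious gap is the knot-class step. Granting your claim that no self-contacts occur during the straight-line homotopy (which itself needs the quantitative split into far-apart parameter pairs, controlled by embeddedness and compactness, and nearby pairs, controlled by local almost-straightness), what you obtain is an isotopy of $\gamma$ to $\tilde p_{n}$ \emph{through embedded curves}. In the topological category this does not imply equality of knot types: every knot is isotopic through embeddings to the round circle (shrink the knotted portion to a point), which is exactly why knot equivalence is defined through \emph{ambient} isotopy. So the assertion that your homotopy ``is an ambient isotopy'' is not a consequence of the absence of self-contacts; producing the ambient extension for fine inscribed polygons is the actual content of \cite[Theorem 4.10]{Strzelecki2013b}, which the paper invokes after checking the diamond property. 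As written, this step does not close, although the statement you need is true and citable. The lesser gap is the shooting argument for the equilateral inscription: the claimed monotone dependence of the total traversed arc length on $\ell$ is unjustified, since as $\ell$ varies every intermediate starting point moves, so monotonicity of a single step does not compose. Fortunately monotonicity is not needed: continuity of $\ell\mapsto b_{n+1}(\ell)$ on small scales (where the first-crossing time is unique and transverse), the two-sided bounds $n\ell\leq b_{n+1}(\ell)-b_{1}\leq (1+\delta_{n})n\ell$ with $\delta_{n}\to 0$, and the intermediate value theorem suffice --- but you should either carry this out or simply cite \cite{Wu2004a}, as the paper does.
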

\begin{proof}
	\textbf{Step 1}
		The mapping $(u,v)\mapsto\langle \gamma'(u),\gamma'(v) \rangle$ is uniformly continuous, and there is $d>0$ such that for all $x\in\sphere_{1}$ and all $u,v\in B_{3d}(x)$ holds
		$\langle \gamma'(u),\gamma'(v) \rangle \geq \cos(\frac{1}{4})$. Therefore, for all $x\in\sphere_{1}$ and all $s,t,y\in B_{3d}(x)$ holds
		\begin{align*}
			\MoveEqLeft
			\abs{\langle \gamma(s)-\gamma(t),\gamma(x)-\gamma(y) \rangle }=\int_{[s,t]\cup[t,s]}\int_{[x,y]\cup[y,x]}\langle \gamma'(u),\gamma'(v) \rangle\,\dd u\,\dd v\\
			&\geq \cos(\tfrac{1}{4})\abs{\gamma(s)-\gamma(t)}\abs{\gamma(x)-\gamma(y)},
		\end{align*}
		which means that $\gamma$ has the $(d,\frac{1}{4})$ diamond property, as defined in \cite[Definition 4.5]{Strzelecki2013b}. According to \cite[Theorem 4.10]{Strzelecki2013b}
		inscribed polygons of edge length smaller than $d$ belong to the same knot class as $\gamma$.
		By \cite{Wu2004a} we know that for each $n$ there is a closed equilateral polygon $\tilde p$ with $n$ edges that is inscribed in $\gamma$, so that this polygon belongs to the same knot class as $\gamma$
		if $n$ is large enough.\\
	\textbf{Step 2}
		Let $\tilde p$ be an equilateral inscribed polygon with $n$ edges defined by $\gamma(b_{i})$, $i=0,\ldots, n$, $\gamma(b_{0})=\gamma(b_{n})$ with $b_{0}=0$ and $n$ sufficiently large. 
		Then for $\epsilon>0$ and $n\geq N(\epsilon)$ holds
		\begin{align}\label{lengthestimateinscribedpolygon}
			\begin{split}
				\MoveEqLeft
				0\leq \mathcal{L}(\gamma)-\mathcal{L}(\tilde p)=\sum_{i=0}^{n-1}(\abs{b_{i+1}-b_{i}}-\abs{\gamma(b_{i+1})-\gamma(b_{i})})\\
				&\stackrel{\text{\eqref{estimatebyfractionalsobolevseminorm}}}{\leq} \max_{i=1,\ldots,n}\abs{b_{i+1}-b_{i}}\sum_{i=0}^{n-1}\int_{b_{i}}^{b_{i+1}}\int_{b_{i}}^{b_{i+1}}\frac{\abs{\gamma'(v)-\gamma'(u)}^{2}}{\abs{v-u}^{2}}\,\dd u \,\dd v
				\leq \frac{\bConst\,\epsilon}{n}.
			\end{split}
		\end{align}
	\textbf{Step 3}
		Set $p(t)=L\tilde L^{-1}\tilde p(\tilde L L^{-1}t)$. For $t\in[a_{j},a_{j+1}]$, $a_{j}=\frac{jL}{n}$ and a constant $c=c(\gamma)$ holds
		\begin{align*}
			\MoveEqLeft
			\abs{\gamma(t)-p(t)}\leq \abs{\gamma(t)-\gamma(b_{j+1})}+\abs{\gamma(b_{j+1})-L\tilde L^{-1}\gamma(b_{j+1})}+\abs{p(a_{j+1})-p(t)}\\
			&\leq \abs{t-a_{j+1}}+\abs{a_{j+1}-b_{j+1}}+\tilde L^{-1}\abs{L-\tilde L}\norm{\gamma}_{L^{\infty}(\sphere_{1},\R^{d})}+\abs{a_{j+1}-t}\leq \frac{c}{n}
		\end{align*}
		due to
		\begin{align}\label{differencebandaj}
			\begin{split}
				\MoveEqLeft
				\abs{b_{j}-a_{j}}=\Big|\sum_{k=0}^{j-1}\abs{b_{k+1}-b_{k}}-\sum_{k=0}^{j-1}\frac{L}{\tilde L}\abs{\gamma(b_{k+1})-\gamma(b_{k})}\Big|\\
				&\leq \sum_{k=0}^{j-1}\Big|\abs{b_{k+1}-b_{k}}-\abs{\gamma(b_{k+1})-\gamma(b_{k})}\Big|
				+\Big|1-\frac{L}{\tilde L}\Big|\sum_{k=0}^{j-1}\abs{\gamma(b_{k+1})-\gamma(b_{k})}\Big|\\
				&\leq 2\abs{L-\tilde L}\stackrel{\text{\eqref{lengthestimateinscribedpolygon}}}{\leq} \frac{2\bConst\,\epsilon}{n}.
			\end{split}
		\end{align}
	\textbf{Step 4}
		For $t\in[a_{j},a_{j+1}]$ we estimate
		\begin{align*}
			\MoveEqLeft
			\abs{\gamma'(t)-p'(t)}^{2}
			=\Big| \gamma'(t)-\frac{\int_{b_{j}}^{b_{j+1}}\gamma'(s)\,\dd s}{\abs{\gamma(b_{j+1})-\gamma(b_{j})}} \Big|^{2}
			=2\Big(1-\frac{\int_{b_{j}}^{b_{j+1}}\langle \gamma'(t),\gamma'(s) \rangle\,\dd s}{\abs{\gamma(b_{j+1})-\gamma(b_{j})}}\Big)\\
			&=2\frac{\abs{\gamma(b_{j+1})-\gamma(b_{j})}-\int_{b_{j}}^{b_{j+1}}\langle \gamma'(t),\gamma'(s) \rangle\,\dd s}{\abs{\gamma(b_{j+1})-\gamma(b_{j})}}
			=2\frac{\int_{b_{j}}^{b_{j+1}}(1-\langle \gamma'(t),\gamma'(s) \rangle)\,\dd s}{\abs{b_{j+1}-b_{j}}}\\
			&=\frac{\int_{b_{j}}^{b_{j+1}}\abs{\gamma'(t) -\gamma'(s) }^{2}\,\dd s}{\abs{b_{j+1}-b_{j}}}
			\leq \epsilon^{2}
		\end{align*}
		if $n$ is large enough, due to the uniform continuity of $\gamma'$ and \eqref{differencebandaj}.\\
	\textbf{Step 5}
		Since the discrete M\"obius energy is invariant under scaling, the proposition is a consequence of Corollary \ref{convergencemoebiusinscribedpolygons}.
\end{proof}

Note, that, by integrating the inequality in Step 4 instead of using continuity of $\gamma'$, we easily find that for $\gamma\in W^{1+\frac{1}{2},2}(\sphere_{1},\R^{d})$
the rescaled inscribed polygons converge in $W^{1,2}$.

\section{Discrete minimizers}

\begin{speciallemma*}[Regular $n$-gon is unique minimizer of $\mathcal{E}_{n}$ in $\mathcal{P}_{n}$]
	The unique minimizer of $\mathcal{E}_{n}$ in $\mathcal{P}_{n}$ is a regular $n$-gon.
\end{speciallemma*}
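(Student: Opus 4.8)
The plan is to exploit that, on the equilateral class $\mathcal{P}_{n}$, the intrinsic part of $\mathcal{E}_{n}$ is a pure constant and the energy is scale invariant, so that minimising $\mathcal{E}_{n}$ reduces to a purely geometric extremal problem for chord lengths. Writing the vertices as $v_{i}\vcentcolon= p(a_{i})$ and denoting the common edge length by $\ell$, we have $d(a_{i+1},a_{i})=\ell$ and $d(a_{j},a_{i})=d_{ij}\ell$ with $d_{ij}=\min\{\abs{i-j},n-\abs{i-j}\}$, whence
\begin{align*}
	\mathcal{E}_{n}(p)=\ell^{2}\sum_{\substack{i,j=1\\i\neq j}}^{n}\frac{1}{\abs{v_{j}-v_{i}}^{2}}-\sum_{\substack{i,j=1\\i\neq j}}^{n}\frac{1}{d_{ij}^{2}}.
\end{align*}
The second sum depends only on $n$, so by scale invariance it suffices to minimise $U(p)\vcentcolon=\sum_{i\neq j}\abs{v_{j}-v_{i}}^{-2}$ over all closed equilateral $n$-gons of fixed edge length $\ell$, and to show that the regular $n$-gon $g_{n}$ is the unique minimiser.

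First I would organise the double sum by diagonal class, $U(p)=\sum_{k=1}^{n-1}\sum_{i=1}^{n}\abs{v_{i+k}-v_{i}}^{-2}$, and treat each $k$ separately. The key input is the inequality of L\"uk\H{o} \cite{Gabor1966a} (also immanent in \cite{Abrams2003a}): for every closed $n$-gon and every $k$,
\begin{align*}
	\sum_{i=1}^{n}\abs{v_{i+k}-v_{i}}^{2}\leq\frac{\sin^{2}(k\pi/n)}{\sin^{2}(\pi/n)}\sum_{i=1}^{n}\abs{v_{i+1}-v_{i}}^{2}=n\ell^{2}\frac{\sin^{2}(k\pi/n)}{\sin^{2}(\pi/n)},
\end{align*}
with equality for the regular polygon. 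Combining this upper bound on the sum of squared $k$-diagonals with the Cauchy--Schwarz inequality $\big(\sum_{i}\abs{v_{i+k}-v_{i}}^{-2}\big)\big(\sum_{i}\abs{v_{i+k}-v_{i}}^{2}\big)\geq n^{2}$ gives, for each $k$,
\begin{align*}
	\sum_{i=1}^{n}\frac{1}{\abs{v_{i+k}-v_{i}}^{2}}\geq\frac{n\sin^{2}(\pi/n)}{\ell^{2}\sin^{2}(k\pi/n)},
\end{align*}
which is exactly the contribution of the regular $n$-gon, whose $k$-diagonals all equal $\ell\sin(k\pi/n)/\sin(\pi/n)$. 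Summing over $k$ yields $U(p)\geq U(g_{n})$, and the minimisation is complete.

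For uniqueness I would trace the equality case. Since $U(p)=U(g_{n})$ forces equality in both Cauchy--Schwarz and L\"uk\H{o} for every $k$, it is enough to use L\"uk\H{o} equality at $k=2$. Passing to the discrete Fourier coefficients $\hat e_{j}$ of the edge vectors $e_{m}=v_{m+1}-v_{m}$, the closing condition reads $\hat e_{0}=0$ and the L\"uk\H{o} equality becomes $\sum_{j}\big(F_{k}(1)-F_{k}(j)\big)\abs{\hat e_{j}}^{2}=0$ with Fej\'er-type weights $F_{k}(j)=\sin^{2}(k\pi j/n)/\sin^{2}(\pi j/n)$ satisfying $F_{k}(j)\leq F_{k}(1)$. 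For $k=2$ one has $F_{2}(j)=4\cos^{2}(\pi j/n)$, so $F_{2}(j)=F_{2}(1)$ only for $j\in\{1,n-1\}$. Hence all Fourier modes except $j=1,n-1$ vanish, giving $e_{m}=A\cos(2\pi m/n)+B\sin(2\pi m/n)$ for some $A,B\in\R^{d}$; the equilaterality $\abs{e_{m}}=\ell$ then forces $\langle A,B\rangle=0$ and $\abs{A}=\abs{B}$, i.e.\ a regular $n$-gon.

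The hard part will be the inequality of L\"uk\H{o} itself, which is where all the geometric content sits; the reduction and the Cauchy--Schwarz step are routine once it is available, and the equality analysis is the only additional work required for uniqueness. I would either cite it directly or reproduce the short Fourier argument sketched above, bounding the Fej\'er kernel $F_{k}(j)$ by its value at $j=1$ while the closing condition $\hat e_{0}=0$ removes the singular $j=0$ mode, since this simultaneously delivers the equality characterisation needed.
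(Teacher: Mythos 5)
Your proposal has the same skeleton as the paper's proof: split the energy into diagonal classes $k$ (the intrinsic part being constant on $\mathcal{P}_{n}$), bound the reciprocal sum from below by $n^{2}/\sum_{i}\abs{v_{i+k}-v_{i}}^{2}$ (you do this by Cauchy--Schwarz, the paper by applying AM--GM twice; these give the identical inequality with the identical equality case), and then invoke L\"uk\H{o}'s theorem that among closed equilateral polygons the regular $n$-gon maximizes the sum of squared $k$-diagonals. The genuine difference is in the uniqueness analysis: the paper cites the equality characterization from \cite{Gabor1966a} and \cite{Abrams2003a} (equality for a fixed $k\in\{2,\ldots,n-2\}$ forces coplanarity of the relevant quadruples, hence a planar affine image of a regular $n$-gon, which combined with the equal-diagonal condition from the mean inequality gives regularity), whereas you re-derive both the inequality and its equality case by discrete Fourier analysis of the edge vectors. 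Your route is more self-contained, since the single weight bound $F_{k}(j)\leq F_{k}(1)$ together with $\hat e_{0}=0$ delivers the inequality and identifies the equality modes at once; the paper's route outsources exactly this content to the literature.

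There is, however, a gap in your uniqueness step at $n=4$. You claim that L\"uk\H{o} equality at $k=2$ plus equilaterality already forces $\abs{A}=\abs{B}$ and $\langle A,B\rangle=0$. Writing out
\begin{align*}
	\abs{e_{m}}^{2}=\tfrac{1}{2}\big(\abs{A}^{2}+\abs{B}^{2}\big)+\tfrac{1}{2}\big(\abs{A}^{2}-\abs{B}^{2}\big)\cos(4\pi m/n)+\langle A,B\rangle\sin(4\pi m/n),
\end{align*}
constancy in $m$ kills both coefficients only if the sampled frequency-$2$ oscillation is nondegenerate, which holds for every $n\neq 4$. For $n=4$ one has $\cos(\pi m)=(-1)^{m}$ and $\sin(\pi m)\equiv 0$, so equilaterality only yields $\abs{A}=\abs{B}$: indeed, any rhombus has edge vectors $A,B,-A,-B$, which are supported on the Fourier modes $j\in\{1,3\}$ and satisfy L\"uk\H{o} equality at $k=2$ (by the parallelogram law both sides equal $8\ell^{2}$), yet a non-square rhombus is not regular. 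The repair is available inside your own argument and you should make it explicit: equality also holds in Cauchy--Schwarz, so the two diagonals of the rhombus are equal, which does force the square. This mirrors the paper, whose equality discussion likewise needs both ingredients (the affine-regular conclusion from L\"uk\H{o} \emph{and} the equal-diagonals conclusion from the mean inequality), not the L\"uk\H{o} equality alone.
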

\begin{proof}
	Using the inequality of arithmetic and geometric means twice we obtain
	\begin{align*}
		\sum_{i=1}^{n}\frac{1}{\abs{p(a_{i})-p(a_{i+k})}^{2}}\geq n\Big(\prod_{i=1}^{n}\frac{1}{\abs{p(a_{i})-p(a_{i+k})}^{2}}\Big)^{\frac{1}{n}}
		\geq \frac{n^{2}}{\sum_{i=1}^{n}\abs{p(a_{i})-p(a_{i+k})}^{2}},
	\end{align*}
	with equality if and only if all $\abs{p(a_{i})-p(a_{i+k})}$ are equal. From \cite[Theorem III]{Gabor1966a} we know that for $n\geq 4$ the sum of diagonals of an equilateral polygon
	is maximized by the regular $n$-gon $g_{n}$, i.e. 
	\begin{align*}
		\sum_{i=1}^{n}\abs{p(a_{i})-p(a_{i+k})}^{2}\leq \sum_{i=1}^{n}\abs{g_{n}(a_{i})-g_{n}(a_{i+k})}^{2}.
	\end{align*}
	Note, that this also works in $\R^{d}$, thanks to \cite[Lemma 7]{Abrams2003a}, with equality for fixed $k\in\{2,\ldots,n-2\}$ if and only if for all $i$ 
	the points $p(a_{i}), p(a_{i+1}),p(a_{i+k})$ and $p(a_{i+k+1})$ are coplanar. This yields
	\begin{align*}
		\MoveEqLeft
		\sum_{k=1}^{n-1}\sum_{i=1}^{n}\frac{1}{\abs{p(a_{i})-p(a_{i+k})}^{2}}\geq \sum_{k=1}^{n-1}\frac{n^{2}}{\sum_{i=1}^{n}\abs{p(a_{i})-p(a_{i+k})}^{2}}\\
		&\geq \sum_{k=1}^{n-1}\frac{n^{2}}{\sum_{i=1}^{n}\abs{g_{n}(a_{i})-g_{n}(a_{i+k})}^{2}}=\sum_{k=1}^{n-1}\sum_{i=1}^{n}\frac{1}{\abs{g_{n}(a_{i})-g_{n}(a_{i+k})}^{2}},
	\end{align*}
	with equality if and only if $p$ is a planar polygon, which follows from the coplanarity before, that is the affine image of a regular polygon, see \cite[Theorem III]{Gabor1966a}, 
	such that all diagonals of the same order have equal length. This means, equality only holds for a regular $n$-gon.
\end{proof}

\begin{appendix}

\section{Variational convergence}

\begin{lemma}[Convergence of minimizers]\label{variationalconvergencegeneral}
	Let $\mathcal{F}_{n},\mathcal{F}:X\to\overline\R$, $Y\subset X$. Assume that $x_{n}\to x$ implies $\mathcal{F}(x)\leq \liminf_{n\to\infty}\mathcal{F}_{n}(x_{n})$ and that for
	every $y\in Y$ there is are $y_{n}\in X$ with $\limsup_{n\to\infty}\mathcal{F}_{n}(y_{n})\leq\mathcal{F}(y)$.
	Let $\abs{\mathcal{F}_{n}(z_{n})-\inf_{X}\mathcal{F}_{n}}\to 0$ and $z_{n}\to z\in X$. Then $\mathcal{F}(z)\leq\liminf_{n\to\infty}\inf_{X}\mathcal{F}_{n}\leq \inf_{Y}\mathcal{F}$.
\end{lemma}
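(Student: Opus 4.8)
The plan is to establish the two inequalities $\mathcal{F}(z)\leq\liminf_{n\to\infty}\inf_{X}\mathcal{F}_{n}$ and $\liminf_{n\to\infty}\inf_{X}\mathcal{F}_{n}\leq\inf_{Y}\mathcal{F}$ separately and then chain them together. This is the abstract version of the fundamental theorem of $\Gamma$-convergence, and both halves reduce to elementary manipulations of $\liminf$ and $\limsup$; the only point requiring care is the bookkeeping with the almost-minimality hypothesis, which I treat first.

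For the left inequality I would start from the almost-minimizing sequence. Since $z_{n}\to z$, the assumed lower bound gives $\mathcal{F}(z)\leq\liminf_{n\to\infty}\mathcal{F}_{n}(z_{n})$. The hypothesis $\abs{\mathcal{F}_{n}(z_{n})-\inf_{X}\mathcal{F}_{n}}\to 0$ (which implicitly forces $\inf_{X}\mathcal{F}_{n}$ to be finite for large $n$, so that the difference is meaningful in $\overline\R$) lets me write $\mathcal{F}_{n}(z_{n})=\inf_{X}\mathcal{F}_{n}+\delta_{n}$ with $\delta_{n}\to 0$; since adding a null sequence does not change a $\liminf$, I conclude $\liminf_{n\to\infty}\mathcal{F}_{n}(z_{n})=\liminf_{n\to\infty}\inf_{X}\mathcal{F}_{n}$, and hence $\mathcal{F}(z)\leq\liminf_{n\to\infty}\inf_{X}\mathcal{F}_{n}$.

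For the right inequality I would fix an arbitrary $y\in Y$ and invoke the recovery-sequence hypothesis to obtain $y_{n}\in X$ with $\limsup_{n\to\infty}\mathcal{F}_{n}(y_{n})\leq\mathcal{F}(y)$. Since each $y_{n}$ is admissible, $\inf_{X}\mathcal{F}_{n}\leq\mathcal{F}_{n}(y_{n})$, so passing to the $\liminf$ and then bounding it by the $\limsup$ gives $\liminf_{n\to\infty}\inf_{X}\mathcal{F}_{n}\leq\liminf_{n\to\infty}\mathcal{F}_{n}(y_{n})\leq\limsup_{n\to\infty}\mathcal{F}_{n}(y_{n})\leq\mathcal{F}(y)$. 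As $y\in Y$ was arbitrary, taking the infimum over $Y$ yields $\liminf_{n\to\infty}\inf_{X}\mathcal{F}_{n}\leq\inf_{Y}\mathcal{F}$. Combining this with the previous inequality proves the claim.

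I expect no genuine obstacle here; the statement is purely formal once the $\Gamma$-convergence hypotheses are in hand. The subtlest point is ensuring that the almost-minimality adjustment $\delta_{n}\to 0$ is compatible with values in $\overline\R$, i.e. that one is not silently subtracting $\pm\infty$; this is guaranteed as soon as $\inf_{X}\mathcal{F}_{n}$ is eventually finite, which is exactly what the hypothesis $\abs{\mathcal{F}_{n}(z_{n})-\inf_{X}\mathcal{F}_{n}}\to 0$ presupposes.
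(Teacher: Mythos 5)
Your proposal is correct and follows essentially the same argument as the paper: both proofs chain $\mathcal{F}(z)\leq\liminf_{n\to\infty}\mathcal{F}_{n}(z_{n})=\liminf_{n\to\infty}\inf_{X}\mathcal{F}_{n}\leq\liminf_{n\to\infty}\mathcal{F}_{n}(y_{n})\leq\limsup_{n\to\infty}\mathcal{F}_{n}(y_{n})\leq\mathcal{F}(y)$ for an arbitrary $y\in Y$ with its recovery sequence, the paper merely writing this as a single string of inequalities rather than splitting it into two halves. Your extra remark about finiteness in $\overline{\R}$ when subtracting $\inf_{X}\mathcal{F}_{n}$ is a careful touch that the paper leaves implicit, but it does not change the substance of the argument.
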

\begin{proof}
	Let $y\in Y$ and $y_{n}\to y$ with $\limsup_{n\to\infty}\mathcal{F}_{n}(y_{n})\leq\mathcal{F}(y)$. Then
	\begin{align*}
		\mathcal{F}(z)\leq \liminf_{n\to\infty}\mathcal{F}_{n}(z_{n})=\liminf_{n\to\infty}\inf_{X}\mathcal{F}_{n}\
		\leq \liminf_{n\to\infty}\mathcal{F}_{n}(y_{n})\leq \limsup_{n\to\infty}\mathcal{F}_{n}(y_{n})\leq \mathcal{F}(y).
	\end{align*}
\end{proof}

\end{appendix}

\bibliography{/Users/sebastianscholtes/Documents/library.bib}{}

\newcommand{\etalchar}[1]{$^{#1}$}
\providecommand{\bysame}{\leavevmode\hbox to3em{\hrulefill}\thinspace}
\begin{thebibliography}{SSvdM13}

\bibitem[ACF{\etalchar{+}}03]{Abrams2003a}
Aaron Abrams, Jason Cantarella, Joseph H.~G. Fu, Mohammad Ghomi, and Ralph
  Howard, \href{http://dx.doi.org/10.1016/S0040-9383(02)00016-2}{\emph{Circles
  minimize most knot energies}}, Topology \textbf{42} (2003), no.~2, 381--394.

\bibitem[Bla12]{Blatt2012a}
Simon Blatt,
  \href{http://dx.doi.org/10.1142/S0218216511009704}{\emph{Boundedness and
  regularizing effects of {O}'{H}ara's knot energies}}, J. Knot Theory
  Ramifications \textbf{21} (2012), no.~1, 1250010, 9.

\bibitem[BRS12]{Blatt2012b}
Simon Blatt, Philipp Reiter, and Armin Schikorra,
  \href{http://arxiv.org/abs/1202.5426}{\emph{{Hard analysis meets critical
  knots. Stationary points of the M{\"o}bius energy are smooth}}}, Preprint,
  2012.

\bibitem[{Dal}93]{Dal-Maso1993a}
Gianni {Dal Maso}, \href{http://dx.doi.org/10.1007/978-1-4612-0327-8}{\emph{An
  introduction to {$\Gamma$}-convergence}}, Progress in Nonlinear Differential
  Equations and their Applications, 8, Birkh{\"a}user Boston Inc., Boston, MA,
  1993.

\bibitem[FHW94]{Freedman1994a}
Michael~H. Freedman, Zheng-Xu He, and Zhenghan Wang,
  \href{http://dx.doi.org/10.2307/2946626}{\emph{M{\"o}bius energy of knots and
  unknots}}, Ann. of Math. (2) \textbf{139} (1994), no.~1, 1--50.

\bibitem[G{\'a}b66]{Gabor1966a}
L{{\"u}}k{\H{o}} G{\'a}bor,
  \href{http://dx.doi.org/10.1007/BF02760067}{\emph{On the mean length of the
  chords of a closed curve}}, Israel J. Math. \textbf{4} (1966), 23--32.

\bibitem[He00]{He2000a}
Zheng-Xu He,
  \href{http://dx.doi.org/10.1002/(SICI)1097-0312(200004)53:4<399::AID-CPA1>3.3.CO;2-4}{\emph{The
  {E}uler-{L}agrange equation and heat flow for the {M}{\"o}bius energy}},
  Comm. Pure Appl. Math. \textbf{53} (2000), no.~4, 399--431.

\bibitem[Her08]{Hermes2008a}
Tobias Hermes, \emph{{Repulsive Potentiale in geometrischen Fl{\"u}ssen}},
  Diplomarbeit, RWTH Aachen University, 2008.

\bibitem[KK93]{Kim1993a}
Denise Kim and Rob Kusner,
  \href{http://projecteuclid.org/getRecord?id=euclid.em/1062620733}{\emph{Torus
  knots extremizing the {M}{\"o}bius energy}}, Experiment. Math. \textbf{2}
  (1993), no.~1, 1--9.

\bibitem[KS97]{Kusner1997a}
Robert~B. Kusner and John~M. Sullivan, \emph{M{\"o}bius energies for knots and
  links, surfaces and submanifolds}, Geometric topology ({A}thens, {GA}, 1993),
  AMS/IP Stud. Adv. Math., vol.~2, Amer. Math. Soc., Providence, RI, 1997,
  pp.~570--604.

\bibitem[{O'H}91]{OHara1991a}
Jun {O'Hara},
  \href{http://dx.doi.org/10.1016/0040-9383(91)90010-2}{\emph{Energy of a
  knot}}, Topology \textbf{30} (1991), no.~2, 241--247.

\bibitem[Rei10]{Reiter2010a}
Philipp Reiter,
  \href{http://dx.doi.org/10.3934/cpaa.2010.9.1463}{\emph{Regularity theory for
  the {M}{\"o}bius energy}}, Commun. Pure Appl. Anal. \textbf{9} (2010), no.~5,
  1463--1471.

\bibitem[Rei12]{Reiter2012a}
Philipp Reiter, \href{http://dx.doi.org/10.1002/mana.201000090}{\emph{Repulsive
  knot energies and pseudodifferential calculus for {O}'{H}ara's knot energy
  family {$E^{(\alpha)},\alpha\in[2,3)$}}}, Math. Nachr. \textbf{285} (2012),
  no.~7, 889--913.

\bibitem[RS06]{Rawdon2006a}
Eric~J. Rawdon and Jonathan~K. Simon,
  \href{http://dx.doi.org/10.1142/S0218216506004543}{\emph{Polygonal
  approximation and energy of smooth knots}}, J. Knot Theory Ramifications
  \textbf{15} (2006), no.~4, 429--451.

\bibitem[RW10]{Rawdon2010a}
Eric~J. Rawdon and Joseph Worthington,
  \href{http://dx.doi.org/10.1142/S0218216510008303}{\emph{Error analysis of
  the minimum distance energy of a polygonal knot and the {M}{\"o}bius energy
  of an approximating curve}}, J. Knot Theory Ramifications \textbf{19} (2010),
  no.~8, 975--1000.

\bibitem[Sim94]{Simon1994a}
Jonathan~K. Simon,
  \href{http://dx.doi.org/10.1142/9789812796172_0006}{\emph{Energy functions
  for polygonal knots}}, Random knotting and linking ({V}ancouver, {BC}, 1993),
  Ser. Knots Everything, vol.~7, World Sci. Publ., River Edge, NJ, 1994,
  pp.~67--88.

\bibitem[Spe07]{Speller2007a}
Rosanna Speller, \emph{{Convexity and Minimum Distance Energy}}, Reu project,
  Department of Mathematics, California State University, San Bernardino, 2007.

\bibitem[Spe08]{Speller2008a}
Rosanna Speller, \emph{{Knots and Minimum Distance Energy}}, Tech. report,
  Smith College, Mathematics and Statistics Department, Northhampton, MA, 2008.

\bibitem[SSvdM13]{Strzelecki2013b}
Pawe{\l} Strzelecki, Marta Szuma{{\'n}}ska, and Heiko von~der Mosel,
  \href{http://dx.doi.org/10.1016/j.topol.2013.05.022}{\emph{On some knot
  energies involving {M}enger curvature}}, Topology Appl. \textbf{160} (2013),
  no.~13, 1507--1529.

\bibitem[Tam06]{Tam2006a}
Johanna Tam, \emph{{The minimum distance energy for polygonal unknots}}, {REU
  Project}, California State University San Bernardino (CSUSB), 2006.

\bibitem[Wu04]{Wu2004a}
Ying-Qing Wu,
  \href{http://dx.doi.org/10.1112/S0024609303002662}{\emph{Inscribing smooth
  knots with regular polygons}}, Bull. London Math. Soc. \textbf{36} (2004),
  no.~2, 176--180.

\end{thebibliography}
\bibliographystyle{amsalpha}
\bigskip
\noindent
\parbox[t]{.8\textwidth}{
Sebastian Scholtes\\
Institut f{\"u}r Mathematik\\
RWTH Aachen University\\
Templergraben 55\\
D--52062 Aachen, Germany\\
sebastian.scholtes@rwth-aachen.de}

\end{document}